\newcommand{\PoissonHam}{\tilde{\mathsf{H}}}
\newcommand{\PoissonMat}{P}
\newcommand{\Poih}{\tilde{\mathsf{h}}}
\newcommand{\SymplecticMat}{S}
\newcommand{\unscaledp}{\tilde{p}}
\newcommand{\CanonicalHam}{\tilde H}
\newtheorem{theorem}{Theorem}[section]
\newtheorem{lemma}[theorem]{Lemma}
\theoremstyle{definition}
\newtheorem{definition}[theorem]{Definition}
\newtheorem{remark}{Remark}
\let\oldsqrt\sqrt
\def\sqrt{\mathpalette\DHLhksqrt}
\def\DHLhksqrt#1#2{%
\setbox0=\hbox{$#1\oldsqrt{#2\,}$}\dimen0=\ht0
\advance\dimen0-0.2\ht0
\setbox2=\hbox{\vrule height\ht0 depth -\dimen0}%
{\box0\lower0.4pt\box2}}
\title[Symplectic Invariants of the Euler Top]
      {Semi-global symplectic invariants of the Euler top}
\author[George Papadopoulos and Holger Dullin]{}
\subjclass{Primary: 37J35; Secondary: 37J15, 70H06, 70E40.}
 \keywords{Euler top, Picard-Fuchs equation, semiglobal symplectic invariants,
Birkhoff normal form, Liouville integrable.}
\thanks{HRD was supported in part by ARC grant DP110102001.}
\begin{document}
\maketitle

\centerline{\scshape George Papadopoulos and Holger R. Dullin }
\medskip
{\footnotesize
 \centerline{School of Mathematics and Statistics}
   \centerline{The University of Sydney}
   \centerline{Sydney, NSW 2006, Australia}
}

\bigskip

\begin{abstract}
We compute the semi-global symplectic invariants near the
hyperbolic equilibrium points of the Euler top. 
The Birkhoff normal form at the hyperbolic point is computed using Lie series.
The actions near the hyperbolic point are found using Frobenius expansion of its Picard-Fuchs equation.
We show that the Birkhoff normal form can also be found 
by inverting the regular solution of the Picard-Fuchs equation.
Composition of the singular action integral with the Birkhoff normal form 
gives the semi-global symplectic invariant. Finally,
we discuss the convergence of these invariants 
and show that in a neighbourhood of the separatrix the pendulum is not  symplectically equivalent to any Euler top.
\end{abstract}

\section{Introduction}

The Euler top is a classical Hamiltonian system which describes the
rotational motion of a rigid body in free space in the absence of
a potential. In the centre of mass frame this can be considered as
the rotation of the rigid body about a fixed point.
This system has been treated extensively in the literature for the last
300 years, so we only give a few selected references here \cite{Leimanis1965,Marsden1999,Cushman1997}.
After reduction by the $SO(3)$ symmetry the Hamiltonian $\tilde{\mathsf H}$ can be written in terms of
the moment of inertia tensor $ M:=\text{diag}(\Theta_{1},\Theta_{2},\Theta_{3})$
and the angular momentum $\boldsymbol{L}:=(L_{1},L_{2},L_{3})$ in the body frame as
\begin{equation} 
\PoissonHam(\boldsymbol{L})=\frac{1}{2}\boldsymbol{L}^{\text{T}} M^{-1}\boldsymbol{L}=\frac{1}{2}\sum_{i=1}^{3}\frac{L_{i}^{2}}{\Theta_{i}}.\label{eq:PoissonHamiltonian}
\end{equation}

Without loss of generality, we assume the ordering $0<\Theta_{1}<\Theta_{2}<\Theta_{3}$.
The principal moments of inertia satisfy triangle inequalities $\Theta_{1}\le\Theta_{2}+\Theta_{3}$
and so forth cyclically. We exclude the degenerate cases $\Theta_{1}=\Theta_{2}$ or $\Theta_{2}=\Theta_{3}$
since then  there is no hyperbolic equilibrium. This way
of writing the Hamiltonian is not symplectic as the angular momenta
are not canonical variables. Instead the system has a Poisson structure
with Poisson structure matrix
\[
\PoissonMat=\left(\begin{array}{ccc}
0 & -L_{3} & L_{2}\\
L_{3} & 0 & -L_{1}\\
-L_{2} & L_{1} & 0
\end{array}\right)
\]
so that the differential equation is written as
\[
\dot{\boldsymbol{L}}=\PoissonMat \, \nabla \PoissonHam = \boldsymbol{L} \times  \nabla \PoissonHam.
\]

Due to conservation of angular momentum, the dynamics of the Euler
top takes place on spheres of constant magnitude of angular momentum $\Vert\boldsymbol{L}\|=\ell$.
In fact the total angular momentum is a Casimir $  C(\boldsymbol{L}):=\Vert\boldsymbol{L}\|^{2}$
of the Poisson structure, $ \PoissonMat \nabla  C = 0$.
The level set of the Casimir 
\begin{eqnarray*}
\mathcal{C}_{\ell} & := & \left\{ \boldsymbol{L}\in\mathbb{R}^{3}\colon C(\boldsymbol{L})=\ell^{2}\right\} 
\end{eqnarray*}
is a sphere and the level set of the Hamiltonian, the energy surface
\begin{eqnarray*}
\mathcal{E}_{\Poih}&: = & \left\{ \boldsymbol{L}\in\mathbb{R}^{3}\colon\PoissonHam(\boldsymbol{L})=\Poih\right\} 
\end{eqnarray*}
is an ellipsoid.
 The solution curves are given by the intersection $\mathcal{C}_{\ell}\cap\mathcal{E}_{\Poih}$.
The (non-degenerate) intersections of the sphere with the ellipsoid
give concentric ellipse-like curves, centred about elliptic equilibria
of which there are four. There are two separatrices, the intersections
of which occur at the two hyperbolic equilibria. In total there are
six equilibria given by $\pm\ell\hat{\boldsymbol{L}}_{i},\, i\in\{1,2,3\}$
where $\hat{\boldsymbol{L}}_{i}$ is defined as the unit vector along
the $L_{i}$ axis. We are interested in the unstable hyperbolic equilibria
($i=2$), corresponding to a steady rotation about the  principal axis of inertia 
corresponding to the middle moment of inertia $\Theta_{2}$.
The eigenvalues of the linearisation about the hyperbolic equilibria are $\pm \lambda$, where
\begin{equation}
\lambda=\frac{\ell}{\Theta_{2}}\sqrt{\frac{\left(\Theta_{2}-\Theta_{1}\right)\left(\Theta_{3}-\Theta_{2}\right)}{\Theta_{1}\Theta_{3}}}.
\end{equation}

In this work the main aim is to calculate the semi-global symplectic
invariants of the Euler top near the hyperbolic equilibrium point.
In \cite{Dufour1994,Toulet1996}, Dufour, Molino and Toulet introduce
the classification of integrable systems using their semi-global symplectic
invariants. Their approach considers the triple $(\mathcal M,\omega,\mathcal F)$, where $\mathcal M$ is a two-dimensional manifold, 
$\omega$ is a symplectic 2-form, and $\mathcal F$ is a Morse foliation given by the 
levels of a Morse function $F$.
Then the equivalence between two integrable Hamiltonian dynamical systems with one degree of freedom 
is introduced in \cite{Dufour1994} by the following definition:
\begin{definition}
Two triples $(\mathcal M_1,\omega_1,\mathcal F_1)$ and $(\mathcal M_2,\omega_2,\mathcal F_2)$ are said to be equivalent if there exists a symplectomorphism between $\mathcal M_1$ and $\mathcal M_2$ that preserves the foliation.
\end{definition}
In formulas in the analytic case this means there is a symplectic diffeomorphism $\Psi:\mathcal M_1 \to \mathcal M_2$ and a diffeomorphism
$\Xi : \mathbb{R} \to \mathbb{R}$
such that 
\begin{equation}
F_1\circ \Psi = \Xi \circ F_2.
\label{eq:equivalence_formula}
\end{equation}
They introduce the semi-global symplectic invariant as the regular part of the action integral 
measuring the symplectic area near a separatrix (see  Figure \ref{fig:ActionAreas}), 
written in a particular canonical coordinates system defined 
using the Birkhoff normal form at the hyperbolic equilibrium point.
The main theorem in \cite{Dufour1994} then is 
\begin{theorem}
In a full neighbourhood of a separatrix of the same topological type 
two systems are equivalent if and only if their semi-global symplectic 
invariants coincide. \label{thm:equivalence}
\end{theorem}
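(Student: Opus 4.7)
The plan is to prove the two implications separately, with essentially all the work lying in the converse direction. The forward direction is a naturality argument: given a symplectomorphism $\Psi: \mathcal M_1 \to \mathcal M_2$ with $F_1 \circ \Psi = \Xi \circ F_2$, any loop on a regular leaf of $\mathcal F_2$ is pushed forward to a loop on a regular leaf of $\mathcal F_1$ enclosing the same symplectic area, so the action integrals obey $A_1 \circ \Xi = A_2$. The logarithmic singular part of the action is determined entirely by the hyperbolic eigenvalue $\lambda$, which is a spectral invariant of the linearisation and therefore a symplectic invariant, so the regular parts must agree after reparametrisation by $\Xi$, which is precisely what it means for the semi-global invariants to coincide.

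For the converse I would construct the required symplectomorphism in two stages and glue. Near each hyperbolic point, the Birkhoff normal form theorem produces canonical coordinates $(p,q)$ in which $F_i$ depends only on the product $pq$, say $F_i = B_i(pq)$. The germ $\Xi := B_1 \circ B_2^{-1}$ is then the natural candidate for the reparametrisation of energies, and composing the two Birkhoff charts gives a local symplectomorphism $\Psi_0$ near the hyperbolic point satisfying $F_1 \circ \Psi_0 = \Xi \circ F_2$. Away from the hyperbolic point, each component of the complement of the separatrix is foliated by regular closed orbits and admits action--angle coordinates. Writing the action in the Birkhoff chart as $A_i(h) = S_\lambda(h) + R_i(h)$, the equality $R_1 = R_2$ of regular parts forces the period functions $A_i'(h)$ to agree on each component, so the angular coordinates on either side of the separatrix can be chosen in such a way that the action--angle symplectomorphism extends $\Psi_0$ consistently across the regular locus.

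The main obstacle is the matching of these two constructions across the separatrix itself. One must verify that the germ $\Xi$ extracted from the Birkhoff reduction is exactly the reparametrisation needed to match the regular action integrals in every region of the complement of the separatrix --- typically four components for a figure-eight topology. This is where the precise formulation of the invariant (the regular part measured in the Birkhoff chart, indexed by the four regions) pays off: uniqueness of the Birkhoff normal form up to its obvious discrete symmetries makes $\Xi$ intrinsic, and the region-wise vanishing of the difference of regular actions then supplies the angular identifications that paste together into a single symplectomorphism $\Psi$ on a full neighbourhood of the separatrix with $F_1 \circ \Psi = \Xi \circ F_2$.
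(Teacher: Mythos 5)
First, a point of reference: the paper does not prove this statement at all --- it is the main theorem of Dufour, Molino and Toulet, quoted from \cite{Dufour1994} (see also \cite{Toulet1996}) purely as motivation for the computations that follow. So there is no in-paper proof to compare yours against; what can be judged is whether your sketch would stand on its own, and in its present form it does not. A smaller problem is in the forward direction: you argue that the logarithmic singular part of the action is fixed by the eigenvalue $\lambda$ of the linearisation, ``a spectral invariant''. But the equivalence in the theorem allows the reparametrisation $\Xi$ of the values of $F$, and $\lambda$ gets rescaled by $\Xi'$ at the critical value, so it is \emph{not} an invariant of this equivalence. This is precisely why the invariant is defined as the regular part of the singular action written as a function of the Birkhoff normal-form variable $J$, where the singular part is the universal $\pm J\log(\pm J)$; the naturality argument must be run in that variable, using uniqueness of the Birkhoff normal form, rather than via $\lambda$. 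That is fixable.

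The serious gap is in the converse, where the entire content of the theorem is the step you yourself flag as ``the main obstacle'' and then dispose of in one sentence. That action--angle coordinates exist on each regular component, and that the Birkhoff normal form gives a local symplectic model near the hyperbolic point, are standard and available for \emph{any} pair of systems; the theorem lives exactly in showing that coincidence of the Taylor series of the regular parts is what makes the locally defined maps glue into a single symplectomorphism on a \emph{full} neighbourhood of the separatrix. Concretely, one must show that the map built on the union of regular leaves --- where the angle variable degenerates and the transit time past the saddle diverges logarithmically --- extends smoothly (analytically, in the setting of this paper) across the separatrix, and that no further data (how the local branches at the hyperbolic point are matched across the several complementary regions, constants such as the enclosed areas, and in the $C^{\infty}$ case the flat discrepancies alluded to in the paper's remark following the theorem) obstructs the gluing. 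Saying that ``the region-wise vanishing of the difference of regular actions then supplies the angular identifications'' names the desired conclusion rather than establishing it; a complete argument constructs the map explicitly, e.g.\ by flowing from transversal sections inside the Birkhoff chart and controlling the divergent transition times, which is how \cite{Dufour1994,Toulet1996} (and, in the focus-focus analogue, \cite{Ngoc2003}) actually proceed.
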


This theorem motivates us to calculate the semi-global symplectic invariant of the Euler top, the first step that may lead to a non-trivial equivalence between the Euler top
and other Hamiltonian systems with one degree of freedom.
We can, for example, ask whether a particular Euler top (it is a three parameter family) 
is equivalent to the pendulum, for which the invariants have been 
calculated in \cite{Dullin2013}.
In the last section we will show that the answer to this question is no.

The theory of semiglobal symplectic invariants was further developed by San Vu Ngoc in \cite{Ngoc2003} who
extended it to focus-focus equilibria for system of two degrees of freedom.
The first explicit computation of semi-global invariants near a focus-focus point
was done by Dullin in \cite{Dullin2013} for the spherical pendulum.
Hyperbolic-Hyperbolic points were treated in \cite{Dullin2007} where
the C.~Neumann system was used to illustrate the general theory.
Other notions of equivalence have been used to study the Euler top in \cite{Bolsinov2004,Bolsinov1997,Orel1997,Bolsinov1994,Holm1991,Marsden1999}.

\begin{remark}
It should be noted that for $C^{\infty}$ systems the relation \eqref{eq:equivalence_formula}
is not true as such, but may need to be formulated separately in each sector defined by the separatrix.
Furthermore, in general the symplectic invariant which is the Taylor series of the regular part
of the action integral is not necessarily convergent. 
However we need not worry about these two issues, since both the Hamiltonian
and Poisson structure of the Euler top are analytic.
\end{remark}

Plan of the paper:
first we will be calculating the Birkhoff normal form algorithmically using
Lie series. For one degree of freedom, the Birkhoff normal form about
a hyperbolic equilibrium is a series in powers of the regular action
only. Next we show that the action integrals satisfy a Picard-Fuchs
ODE. Then we solve this ODE using the method of Frobenius, and arrive
at series expansions for the action integrals. We  find that if
we invert the regular action integral then we  recover the
Birkhoff normal form. Once we have the Birkhoff normal form, we calculate
the symplectic invariant by composing the singular action integral
with the Birkhoff normal form, and extracting the regular part.
Finally we discuss convergence and non-equivalence to the pendulum.

\section{Calculating the Birkhoff normal form}

In order to calculate the Birkhoff normal form of the Hamiltonian
of the Euler top at the unstable equilibrium, we introduce local canonical
variables $( q,\unscaledp)$ with symplectic structure $\SymplecticMat:=\left(\begin{array}{cc}
0 & 1\\
-1 & 0
\end{array}\right)$:
\begin{lemma}
The Poisson map $\boldsymbol{\Phi}:\mathbb{R}^{3}\rightarrow\mathbb{R}^{2}$
defined by
\begin{subequations}\label{eq:map}
\begin{align}
q& = \mathrm{Arg}(L_2+\mathrm iL_1) \label{eq:map1}\\
\unscaledp&=L_3\label{eq:map2}
\end{align}
\end{subequations}
maps the Hamiltonian $\PoissonHam(\boldsymbol{L})$ of \eqref{eq:PoissonHamiltonian} with value $\Poih$ and Poisson structure
$\PoissonMat$ on the symplectic leaf ${\mathcal C}_\ell$ into the 
standard symplectic structure $\SymplecticMat$ in canonical variables $(q,\unscaledp)$
and Hamiltonian $\CanonicalHam(q,\unscaledp)$ with value $\tilde{h}$ given by
\begin{equation}
 \CanonicalHam(q,\unscaledp)=\frac{1}{2}\left[\unscaledp^{2}\left(\Theta_3^{-1}-f(q)\right)-\ell^{2}\left(\Theta_2^{-1}-f(q)\right)\right]\label{eq:CanonicalHamiltonian}
\end{equation}
where $f(q):=\Theta_1^{-1}\sin^{2}q+\Theta_2^{-1}\cos^{2}q$. 
\end{lemma}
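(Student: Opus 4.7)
The plan is to parametrize the symplectic leaf $\mathcal{C}_\ell$ via the inverse of $\boldsymbol{\Phi}$ and verify the two claims by direct computation. Since $L_3 = \unscaledp$ and $L_2 + \mathrm iL_1 = r e^{\mathrm i q}$ with $r = \sqrt{L_1^2+L_2^2} = \sqrt{\ell^2 - \unscaledp^2}$ on $\mathcal{C}_\ell$, the inverse map is
\begin{equation*}
L_1 = \sqrt{\ell^2 - \unscaledp^2}\sin q,\qquad L_2 = \sqrt{\ell^2 - \unscaledp^2}\cos q,\qquad L_3 = \unscaledp,
\end{equation*}
which is well-defined on the open set $\{L_1^2 + L_2^2 > 0\}$, and this set contains the hyperbolic equilibrium $(L_1,L_2,L_3) = (0,\pm\ell,0)$.

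First I would verify that the pullback of $\PoissonMat$ yields the standard symplectic structure by computing the bracket $\{q,\unscaledp\} = (\nabla q)^{\mathrm T}\PoissonMat\,\nabla\unscaledp$. Using $\nabla q = (L_2,-L_1,0)/(L_1^2+L_2^2)$ together with $\PoissonMat\,\nabla\unscaledp = (L_2,-L_1,0)^{\mathrm T}$, this collapses immediately to $(L_1^2+L_2^2)/(L_1^2+L_2^2) = 1$. Because $C$ is a Casimir, the Poisson structure restricts to a non-degenerate symplectic form on $\mathcal{C}_\ell$, and $\{q,\unscaledp\}=1$ identifies $(q,\unscaledp)$ as Darboux coordinates with symplectic matrix $\SymplecticMat$.

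Next I would substitute the parametrization into $\PoissonHam$. Regrouping $L_1^2/\Theta_1 + L_2^2/\Theta_2 = (\ell^2 - \unscaledp^2) f(q)$ and adding $L_3^2/\Theta_3 = \unscaledp^2/\Theta_3$ yields
\begin{equation*}
\PoissonHam|_{\mathcal{C}_\ell} = \tfrac12\bigl[\unscaledp^2(\Theta_3^{-1} - f(q)) + \ell^2 f(q)\bigr] = \CanonicalHam(q,\unscaledp) + \frac{\ell^2}{2\Theta_2},
\end{equation*}
so the two Hamiltonians agree up to the additive constant $\ell^2/(2\Theta_2)$, which is fixed on the leaf. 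This constant is exactly the normalisation that places the hyperbolic equilibrium at $\tilde h = 0$, as required for the Birkhoff analysis in the next section. The only subtle point is the branch cut of $\mathrm{Arg}$ along the $L_3$-axis, but since the hyperbolic equilibrium lies away from this axis, the map is analytic in a full neighbourhood of it, which suffices for all of the semi-global analysis that follows.
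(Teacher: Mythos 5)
Your proposal is correct and follows essentially the same direct-verification route as the paper: the paper substitutes the map into the Hamiltonian (using $\ell^2=L_1^2+L_2^2+L_3^2$) to get the same relation $\CanonicalHam\circ\boldsymbol{\Phi}+\tfrac12\ell^2\Theta_2^{-1}=\PoissonHam$, and checks the structure via $D\boldsymbol{\Phi}\,\PoissonMat\,(D\boldsymbol{\Phi})^{\mathrm T}=\SymplecticMat$, which is exactly your bracket computation $\{q,\unscaledp\}=1$ in matrix form. The only cosmetic difference is that you invert the map to parametrize $\mathcal{C}_\ell$ rather than substituting forward, and your closing remark should really refer to the singular set $L_1=L_2=0$ (the points $\pm\ell\hat{\boldsymbol{L}}_3$, with $q$ viewed as an angle) rather than a branch cut, as the paper notes after the lemma.
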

\begin{proof}
Substituting equations (\ref{eq:map}) into \eqref{eq:CanonicalHamiltonian}
and using $\ell^2 = L_1^2 + L_2^2 + L_3^2$
yields the required Hamiltonian \eqref{eq:PoissonHamiltonian} up to the constant $\tfrac{1}{2}\ell^{2}\Theta_2^{-1}$, so that
$\CanonicalHam( \boldsymbol{\Phi}(\boldsymbol{ L})) + \tfrac{1}{2}\ell^{2}\Theta_2^{-1} = \PoissonHam (\boldsymbol{ L})$.
The hyperbolic equilibrium $\ell\hat{\boldsymbol{L}}_2$ is mapped to the origin $(q,\unscaledp)=(0,0)$, 
while   $-\ell\hat{\boldsymbol{L}}_2$ is mapped to $(q,\unscaledp)=(\pi,0)$.
To derive the new symplectic structure, compute the $2\times3$ Jacobian matrix $D \boldsymbol{\Phi}$
and verify that 
${D}\boldsymbol{\Phi}\PoissonMat \left({D}\boldsymbol{\Phi}\right)^{\mathrm{T}}=\SymplecticMat$.
\end{proof}

Note that this transformation to ``cylindrical coordinates for the sphere'' is not defined globally on sphere, 
but only on the punctured sphere with the two points $\pm\ell \hat{\boldsymbol{L}}_3$ (where
$L_1 = L_2 = 0$) removed.
However, the transformation is valid near the unstable equilibria $\pm \ell \hat{\boldsymbol{L}}_2 $ and in a full neighbourhood of their separatrix.
An alternative transformation introduces $L_1$ as momentum instead of $L_3$, 
and the corresponding Hamiltonian $\CanonicalHam(q,\unscaledp)$ has $\Theta_1$ and $\Theta_3$ interchanged. 

At this stage it is convenient to define
the dimensionless real parameters \begin{subequations} 
\begin{align}
\rho & :=\sqrt{\frac{\Theta_{1}(\Theta_{3}-\Theta_{2})}{\Theta_{3}(\Theta_{2}-\Theta_{1})}}\\
\kappa & :=\rho-\rho^{-1},\label{eq:kappa}
\end{align}
\end{subequations}which will be fundamental in the upcoming analyses.
Note that if $\rho\to\rho^{-1}\Leftrightarrow\kappa\to-\kappa$ are
exchanged then $\Theta_{1}\to\Theta_{3}$ are exchanged. The involution
$\rho\to-\rho^{-1}$ leaves $\kappa$ invariant. Furthermore, $\kappa^{2}=\rho^{2}-2+\rho^{-2}$
is rational in the moments of inertia. When restricting to the physical
range $\rho>0$, making $\rho$ the subject in \eqref{eq:kappa} yields
the unique injection $\rho=\frac{1}{2}(\kappa+\sqrt{\kappa^{2}+4})$.
We are then able to re-write our Hamiltonian (originally posed with three parameters)
in terms of a single dimensionless parameter by performing the following non-dimensionalisation:
\begin{lemma}
Using $\frac1{\lambda}$ as units of time, $\ell$ as units of angular momentum,
and $\frac{\ell}{\lambda}$ as units of moment of inertia,
the Hamiltonian in non-dimensional form is
\begin{equation}
         H(q,p) = \frac12 \left ( - p^2 ( \rho + \rho^{-1} \sin^2 q) +  \rho^{-1} \sin^2 q \right). \label{eq:DimensionlessH}
\end{equation}
\end{lemma}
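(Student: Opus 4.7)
The proof is a direct rescaling argument, so my plan is to carry it out in three short steps and to spend most of the effort on the two algebraic identities that link $\rho$, $\lambda$, and the principal moments of inertia.

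First, I would simplify the expressions $\Theta_3^{-1}-f(q)$ and $\Theta_2^{-1}-f(q)$ appearing in \eqref{eq:CanonicalHamiltonian} by using $\cos^2 q = 1-\sin^2 q$, obtaining
\[
\Theta_3^{-1}-f(q) = (\Theta_3^{-1}-\Theta_2^{-1}) - (\Theta_1^{-1}-\Theta_2^{-1})\sin^2 q, \qquad \Theta_2^{-1}-f(q) = -(\Theta_1^{-1}-\Theta_2^{-1})\sin^2 q,
\]
so that $\CanonicalHam$ becomes a linear combination of $\sin^2 q$ with coefficients depending only on two of the three curvatures $\Theta_i^{-1}-\Theta_j^{-1}$. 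Writing these with a common sign as $-A := \Theta_3^{-1}-\Theta_2^{-1} = -(\Theta_3-\Theta_2)/(\Theta_2\Theta_3)$ and $-B := \Theta_2^{-1}-\Theta_1^{-1} = -(\Theta_2-\Theta_1)/(\Theta_1\Theta_2)$ keeps track of signs cleanly (both negative since $\Theta_1<\Theta_2<\Theta_3$), and yields
\[
\CanonicalHam(q,\unscaledp) = \tfrac{1}{2}\bigl[-\unscaledp^2(A+B\sin^2 q) + \ell^2 B\sin^2 q\bigr].
\]

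The second step is the key computation. From the definitions of $\rho$ and $\lambda$, one checks directly that
\[
AB = \frac{(\Theta_3-\Theta_2)(\Theta_2-\Theta_1)}{\Theta_1\Theta_2^2\Theta_3} = \frac{\lambda^2}{\ell^2}, \qquad \frac{A}{B} = \frac{\Theta_1(\Theta_3-\Theta_2)}{\Theta_3(\Theta_2-\Theta_1)} = \rho^2,
\]
and hence, since $A,B,\rho,\lambda>0$,
\[
A = \rho\,\frac{\lambda}{\ell}, \qquad B = \rho^{-1}\,\frac{\lambda}{\ell}.
\]
Substituting these into the expression for $\CanonicalHam$ factors out a uniform $\lambda/\ell$ inside the bracket.

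Finally, I would perform the rescaling dictated by the stated units: set $\unscaledp = \ell\, p$, $t = \tau/\lambda$, and define $H := \CanonicalHam/(\ell\lambda)$. Since moment of inertia has units $\ell/\lambda$ and energy $\sim L^2/\Theta$, the natural energy unit is indeed $\ell\lambda$, so $H$ is dimensionless; moreover a direct chain-rule check shows $dq/d\tau = \partial H/\partial p$ and $dp/d\tau = -\partial H/\partial q$, so this is a legitimate nondimensionalisation of Hamilton's equations on $(q,p)$ with the standard symplectic form. Plugging $\unscaledp = \ell p$ and the expressions for $A,B$ into the bracketed form of $\CanonicalHam$ above, the factors of $\ell^2$ and $\lambda/\ell$ combine to give exactly $\ell\lambda$ on the outside, and dividing by $\ell\lambda$ produces \eqref{eq:DimensionlessH}.

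There is no real obstacle here; the only thing to be careful about is the sign bookkeeping in step one (the three curvatures have definite signs under the ordering $\Theta_1<\Theta_2<\Theta_3$, and it is this ordering that produces the minus sign in front of $p^2(\rho+\rho^{-1}\sin^2 q)$) and the verification in step two that the positive square root conventions for $\rho$ and $\lambda$ are mutually consistent, so that $A/\rho$ and $B\rho$ both equal the positive quantity $\lambda/\ell$ rather than its negative.
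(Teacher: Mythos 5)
Your proposal is correct and is essentially the paper's own argument: the paper simply declares the lemma ``a simple calculation'' based on the scalings $p=\unscaledp/\ell$, $h=\tilde h/(\lambda\ell)$, and your write-up carries out exactly that substitution, with the identities $A/B=\rho^2$ and $AB=\lambda^2/\ell^2$ correctly verified and the time rescaling checked to preserve Hamilton's equations.
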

The proof is a simple calculation. The new scaled angular momentum $p=\frac{\unscaledp}{\ell}$ is dimensionless, as is the value of the Hamiltonian 
$h = \frac{\tilde h }{  \lambda \ell}$.
\begin{remark}
Each time a transformation is done, the variables change. For clarity and simplicity of notation we use 
the same letters $(q,p)$ for old and new variables, but it should be noted that each transformation introduces different variables.
In our notation the tilde designates quantities with dimensions, while from this point onwards we use non-dimensionalized 
quantities $H$, $q$, $h$ without tilde. Sans-serif font is used to designate quantities in the original Poisson system,
so that $\Poih  - \frac12 \ell^2 \Theta_2^{-1} = \tilde h  = h \lambda \ell$.
\end{remark}
\begin{remark}
The area form on the original sphere in these variables is $\ell\, \mathrm{d} q \wedge \mathrm{d} \tilde p 
= \ell^2\, \mathrm{d} q \wedge \mathrm{d}p$ so that the scaled symplectic area  $\mathrm{d} q \wedge \mathrm{d}p$ 
differs from the true area on the Casimir sphere $\mathcal{C}_\ell$ by a factor of $\ell^2$.
\end{remark}

The original Hamiltonian has a group of discrete symmetries generated by $L_i \to -L_i$, $i = 1,2, 3$. In the canonical variables these 
correspond to $q \to -q$, $q \to \pi - q$, $p \to - p$, respectively. 
The global analysis we are going to present later is simplest if there is only a single hyperbolic equilibrium on the separatrix in question, 
and therefore we are going to consider the Euler top modulo its discrete symmetry group. 
Any two pairs of the three discrete
symmetries generate the group of symplectic discrete symmetries of the Euler top, 
which is isomorphic to Klein's Vierergruppe $V = \mathbb{Z}_2 \otimes \mathbb{Z}_2$.
In the canonical variables a possible choice of generators is
$S_1(q, p) = (- q, -p)$ and $S_2(q,p) = (\pi + q, p)$ which are both involutions.
A fundamental region for the quotient of the cylindrical $(q,p)$ phase space $[-\pi, \pi) \times (-1,1)$ 
by the group $V$ generated by $S_1$ and $S_2$ can be chosen as  the positive quadrant $(q,p)\in[0,\pi)\times[0,1)$.
 This corresponds to a quarter of the original sphere $$V\left(\mathcal C_{\ell}\right):=\left\{\boldsymbol L\in\mathcal C_{\ell}\colon0<L_1<\ell,-\ell<L_2<\ell,0<L_3<\ell\right\}.$$

We Taylor expand  the non-dimensional Hamiltonian $H(q,p)$
about the origin $(q,p)=(0,0)$ for analysis near the equilibrium $\boldsymbol{L}=\ell\hat{\boldsymbol{L}}_{2}$. 
The quadratic terms are $\tfrac12(\rho^{-1} q^2 - \rho p^2 )$.
The Williamson (linear) normal form of the hyperbolic equilibria is found after a symplectic
linear transformation (e.g. as outlined in \cite{Bolsinov2004}). 
Although the Williamson normal form is unique up to the overall sign of the $qp$ term which we chose to be positive,
the transformation is not; we chose to perform a symplectic scaling $q \to \sqrt{\rho}q,\, p \to \frac p{\sqrt{\rho}}$ followed by a rotation by $-\frac{\pi}4$, 
so that the positive quadrant in the new coordinates corresponds to positive Hamiltonian.
With this convention the Williamson normal form becomes unique.
\begin{lemma}
The symplectic linear transformation
\[
\left(\begin{array}{c}
q\\
p
\end{array}\right)\mapsto
\left(\begin{array}{cc}
\sqrt{\rho} & 0 \\ 0 & \tfrac1{\sqrt{\rho}} 
\end{array}\right)
\frac{1}{\sqrt{2}}
\left(\begin{array}{rr}
1 & 1 \\ - 1 & 1
\end{array}\right)
\left(\begin{array}{c}
q\\
p
\end{array}\right)
\]
puts the quadratic part of the Hamiltonian $H$ into Williamson's normal form, and the new Hamiltonian is
\[
H_* = q p - \frac{1}{8 \rho} (q^2 - p^2)^2 - \frac{\rho} {24 } (q+p)^4 + \mathcal{O}(6)
\]
\end{lemma}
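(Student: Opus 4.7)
The plan is to Taylor-expand $H(q,p)$ through order four, substitute the prescribed linear change of variables, and match coefficients. Using $\sin^2 q = q^2 - \tfrac13 q^4 + \mathcal{O}(q^6)$ in \eqref{eq:DimensionlessH} yields
\[
H(q,p) = \tfrac12\bigl(\rho^{-1}q^2 - \rho p^2\bigr) - \tfrac16\rho^{-1} q^4 - \tfrac12 \rho^{-1} p^2 q^2 + \mathcal{O}(6);
\]
no $p^4$ term appears because $H$ is quadratic in $p$, and odd-degree terms vanish by parity, so these are the only order-four contributions.

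Next I would factor the prescribed matrix as the symplectic scaling $\mathrm{diag}(\sqrt\rho,\,1/\sqrt\rho)$ composed with the rotation by $-\pi/4$. Each factor has determinant one, so the composition is symplectic; equivalently the identity $(D\Phi)^T S (D\Phi) = S$ follows by direct computation on the $2\times 2$ matrix. Reading the lemma's $\mapsto$ as the substitution ``old coordinates equal matrix applied to new coordinates'', the old variables become $q_{\mathrm o}=\sqrt{\rho/2}\,(q+p)$ and $p_{\mathrm o}=(p-q)/\sqrt{2\rho}$, where I reuse $(q,p)$ for the new canonical pair in line with the paper's convention.

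The remainder is bookkeeping. The quadratic part evaluates as
\[
\tfrac12\bigl(\rho^{-1}q_{\mathrm o}^2 - \rho p_{\mathrm o}^2\bigr)=\tfrac14\bigl[(q+p)^2-(q-p)^2\bigr]=qp,
\]
which is Williamson's normal form. For the quartic contributions, the two identities $q_{\mathrm o}^2=\tfrac{\rho}{2}(q+p)^2$ and $p_{\mathrm o}^2 q_{\mathrm o}^2=\tfrac14(q^2-p^2)^2$ convert $-\tfrac16\rho^{-1}q_{\mathrm o}^4$ into $-\tfrac{\rho}{24}(q+p)^4$ and $-\tfrac12\rho^{-1}p_{\mathrm o}^2 q_{\mathrm o}^2$ into $-\tfrac{1}{8\rho}(q^2-p^2)^2$, which are precisely the two quartic terms in the stated $H_*$.

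The computation is elementary; the only genuine hazard is convention. Exchanging the rotation matrix for its transpose flips the sign of the $qp$ term, and the particular orientation selected in the lemma is exactly what fixes the positive sign, consistent with the paper's stipulation that the positive quadrant corresponds to positive $H_*$.
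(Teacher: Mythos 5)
Your proof is correct and follows the same route the paper implicitly takes (the lemma is stated there as a direct computation): Taylor expand $H$ to fourth order, substitute the scaling composed with the rotation by $-\pi/4$, and collect terms, with the determinant-one observation sufficing for symplecticity since the phase space is two-dimensional. Your reading of the transformation as the substitution of old variables by the matrix applied to the new ones is the intended one, and all coefficients check out.
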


In order to arrive at the Birkhoff normal form, we use the method
of Lie transforms to remove terms that are not powers of $qp$. A recursive algorithm for this is given in, e.g.~\cite{Meyer2010}.
The algorithm is implemented in Mathematica; we state the main results in the following Theorem:
\begin{theorem}
The Birkhoff normal form
 of the Euler top at the hyperbolic equilibrium
point is given by
\begin{eqnarray}
H^{*}(J) & = & J-\frac{\kappa}{4} J^2-\frac{\kappa^{2}+4}{16} J^3-\frac{5\kappa\left(\kappa^{2}+4\right)}{128} J^4 \nonumber \\
 && -\frac{3\left(\kappa^{2}+4\right)\left(11\kappa^{2}+12\right)}{1024} J^5 \label{eq:BNF}
   -\frac{7\kappa\left(\kappa^{2}+4\right)\left(9\kappa^{2}+20\right)}{2048} J^6 \\
 &&     -\frac{\left(\kappa^{2}+4\right)\left(527\kappa^{4}+1776\kappa^{2}+720\right)}{16384} J^7+\mathcal{O}\left(J^8\right)\nonumber
\end{eqnarray}
where $J = q p$ in the new variables.
\end{theorem}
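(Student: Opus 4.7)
The proof is essentially a controlled formal computation, and the plan is to implement the standard Lie-series normalization procedure starting from the Williamson form $H_*$ produced by the previous lemma, and then to read off the coefficients of the resulting power series in $J = qp$ up to order seven.

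The structural fact that drives the whole argument is that for $H_0 = qp$ the adjoint action $\mathrm{ad}_{H_0} = \{H_0, \cdot\}$ is diagonal on the monomial basis with $\{H_0, q^a p^b\} = (b-a) q^a p^b$. Hence on each space of homogeneous polynomials of degree $2m$ the kernel is one-dimensional and spanned by $J^m$, while every monomial $q^a p^b$ with $a\neq b$ lies in the image with explicit preimage $(b-a)^{-1} q^a p^b$. This solves the homological equation in closed form and guarantees that at every order the non-resonant part can be killed.

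The recursive Lie-transform procedure, as presented for example in \cite{Meyer2010}, then proceeds by induction on the order. Suppose that after normalization through order $2k-2$ we have $H = J + c_2 J^2 + \cdots + c_{k-1} J^{k-1} + H_{2k} + H_{2k+2} + \cdots$ with each $H_{2j}$ homogeneous of degree $2j$. Split $H_{2k} = c_k J^k + H_{2k}^{\mathrm{rng}}$ (the first summand the projection onto the kernel of $\mathrm{ad}_{H_0}$, the second in the range), solve $\{H_0, W_{2k}\} = -H_{2k}^{\mathrm{rng}}$ monomialwise, and apply the symplectic transformation $\exp(\mathrm{ad}_{W_{2k}})$. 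The new Hamiltonian has $c_k J^k$ in place of $H_{2k}$, while the higher-order tail is modified through the usual triangular Lie-series recursion, which must be tracked carefully. A preliminary bookkeeping remark is that the discrete symmetry $(q,p)\to(-q,-p)$ inherited from $L_i\to -L_i$ forces only even joint degrees to appear throughout, so the odd-order normalizations are vacuous.

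The only genuine obstacle is the combinatorial explosion of terms: reaching the $J^7$ coefficient requires carrying the computation reliably through degree fourteen in $(q,p)$, tracking hundreds of monomials and simplifying rational expressions in $\rho$. The computation is done in Mathematica, as stated in the text. After completion one collects the coefficients and rewrites them in terms of $\kappa$ using $\kappa^2 + 4 = (\rho + \rho^{-1})^2$, which produces the closed form \eqref{eq:BNF}. An independent consistency check, obtained by inverting the regular Frobenius solution of the Picard–Fuchs equation, is indicated in the plan of the paper and will be available in a later section.
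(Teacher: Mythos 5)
Your proposal follows essentially the same route as the paper: starting from the Williamson form $H_*$, apply the standard Lie-transform normalization (the homological equation being solvable because $\mathrm{ad}_{qp}$ is diagonal on monomials with eigenvalue $b-a$, resonant part spanned by powers of $J=qp$), carry the computation in Mathematica through degree fourteen, and express the coefficients in $\kappa$ via $\kappa^2+4=(\rho+\rho^{-1})^2$, with the inversion of the regular action as an independent check. This matches the paper's (largely computational) argument, including the parity observation that only even degrees occur.
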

Note that the parameter dependence on the right hand side is only through the dimensionless parameter $\kappa = \rho - \rho^{-1}$
and the power series is in terms of the dimensionless action  $J$.
The normal form coefficients are displayed up to order 14 in the  canonical variables $q$ and $p$.

\section{Action integrals via Picard-Fuchs equation}

\begin{figure} 
{\includegraphics[width=0.47\textwidth]{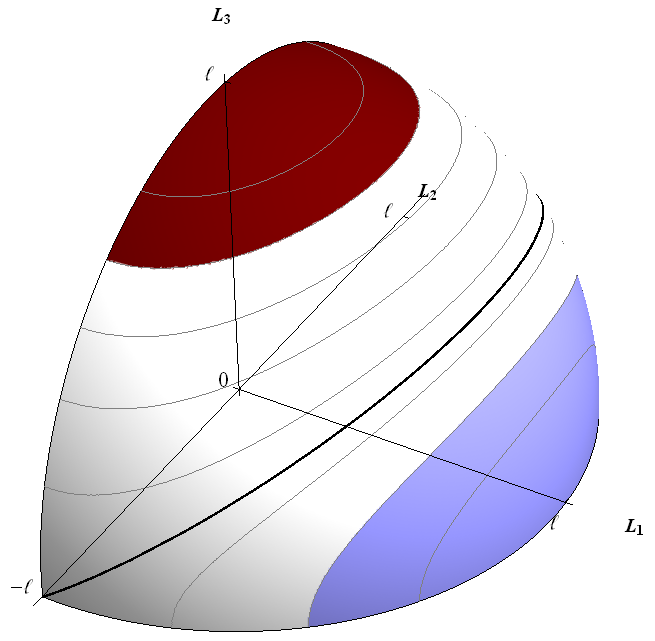}} \hspace{1em}
{\includegraphics[width=0.47\textwidth]{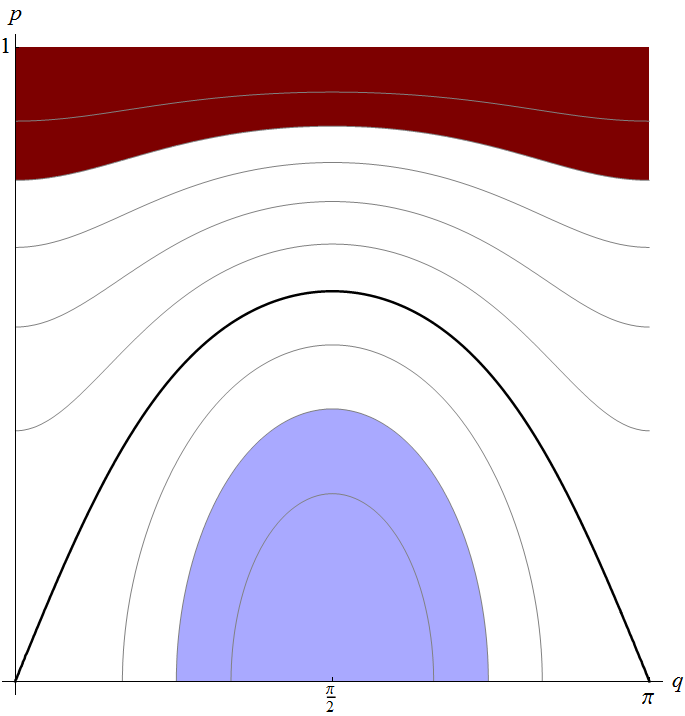}}\\
\hspace{0.7cm} Casimir sphere  \hspace{3cm} Canonical phase plane\\
\caption{ 
Areas enclosed by the same closed orbits after discrete symmetry reduction
(left) on the quarter Casimir sphere $V\left(\mathcal C_\ell\right)$ and 
(right) on the scaled and discrete symmetry reduced canonical phase plane  $(q,p)\in[0,\pi)\times[0,1)$.
The parameter is $\kappa = 0.5$.
The thick black line is the separatrix with $h=0$. 
The lighter shade (blue) below the separatrix shows the action of an orbit with $h>0$ and 
the darker shade  (red) above the separatrix shows the action of an orbit with $h<0$.
The symplectic invariant is calculated from these areas in the singular limit $h \to 0$.
\label{fig:ActionAreas}}
\end{figure}

The essential step in the calculation of the semi-global symplectic invariants 
is the computation of the action integrals, which are given by complete elliptic integrals in the case of the Euler top.
Since we are interested in the series expansions of these integrals the most natural approach 
is not through the integral itself, but instead through the so called Picard-Fuchs ODE that the 
integral satisfies. 
The derivation of the Picard-Fuchs equation proceeds in a way similar to \cite{Dullin2001}.
Frobenius expansions of this linear ODE then gives the desired series.
This  gives a basis for the vector space of solutions of the linear ODE, and in a second 
step the particular solutions corresponding to the action integrals of the Euler top are found.
\begin{figure}
{\includegraphics[width=8cm]{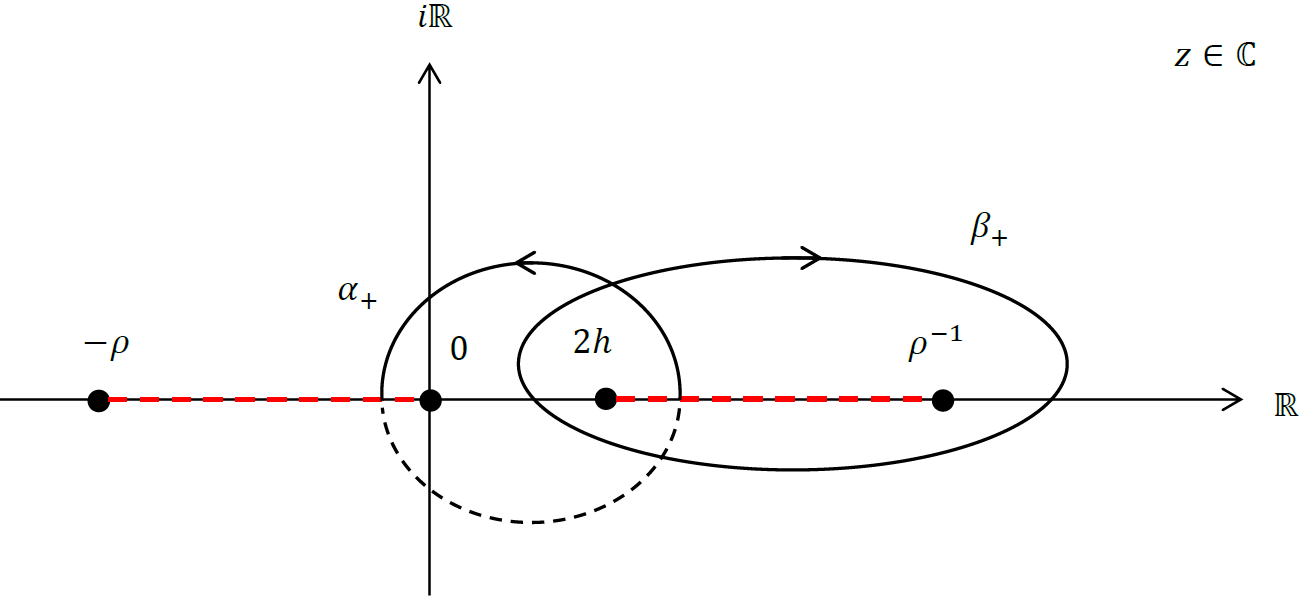}} \\
 {\includegraphics[width=8cm]{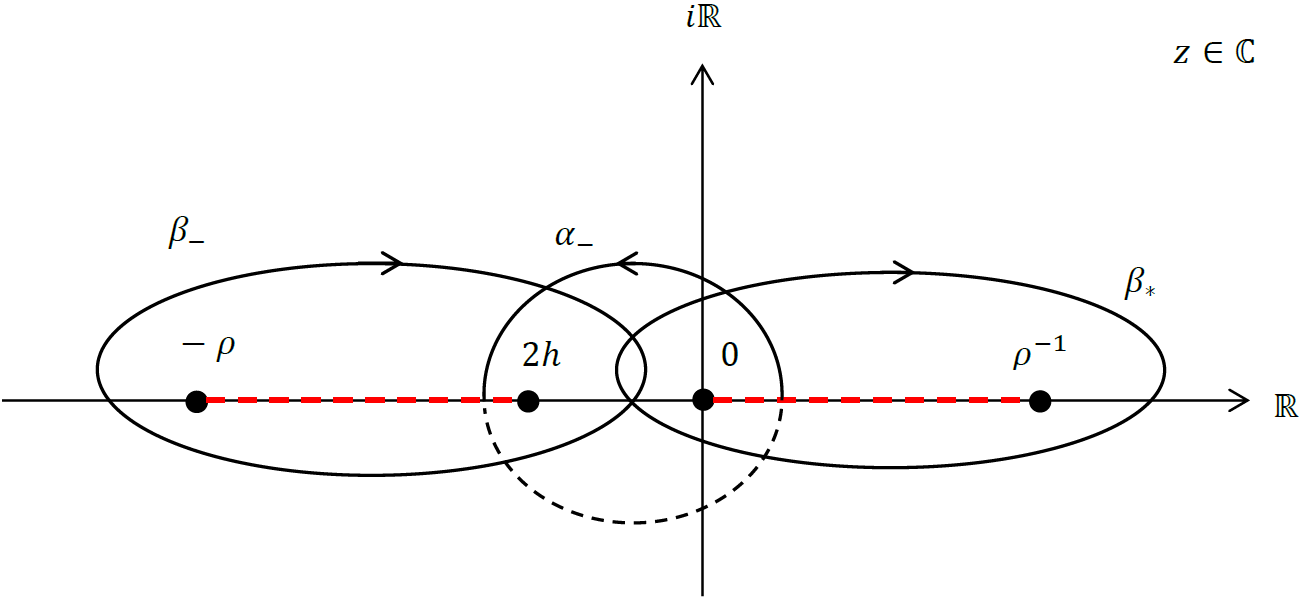}}
\caption{The $\alpha$ and $\beta$ cycles, choices of branch cuts and branch points of $\Gamma$ in the $z\in\mathbb C$ plane
for $ h>0$ (top) and $h<0$ (bottom). \label{fig:contours}}
\end{figure}

\begin{lemma}
The scaled and discrete symmetry reduced action of the Euler top with scaled energy $h$ normalised to 0 at the unstable equilibrium is 
a complete elliptic integral of the third kind on the curve
\[
     \Gamma := \{ (z, u) \in \mathbb{C}^2 \,|\, u^2 = (2 h  - z) w(z)^2,  \quad w(z)^2 = z( z^2 + \kappa z - 1) \}
\]
over the Abelian differential $\zeta$ given by
\[
    I_{\beta_{\pm}}(h)= \frac{1}{4\pi} \oint_{\beta_{\pm}} \zeta(h),\quad \zeta(h):=\frac{\sqrt{2h-z}}{w(z)}\,\mathrm{d}z
\]
along the cycles $\beta_\pm$ for $\pm h > 0$ as specified in Fig.~\ref{fig:contours}.
\end{lemma}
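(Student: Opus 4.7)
The plan is to solve $H(q,p)=h$ from \eqref{eq:DimensionlessH} for $p$, write the reduced action as $\frac{1}{2\pi}\int p\,\mathrm dq$ along the orbit inside the fundamental domain, and perform the substitution $z=\rho^{-1}\sin^2 q$ to recast the integrand as an Abelian differential on $\Gamma$. Solving the quadratic in $p$ yields $p^2 = (\rho^{-1}\sin^2 q - 2h)/(\rho+\rho^{-1}\sin^2 q)$, which vanishes at the hyperbolic equilibrium $(q,p)=(0,0)$ when $h=0$; this is the stated energy normalisation. For $h>0$ one obtains a libration confined to $\sin^2 q\ge 2h\rho$ between turning points inside $(0,\pi)$, and for $h<0$ a rotation across the full fundamental interval $[0,\pi)$ with $p$ bounded away from zero.

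Next I would substitute $z=\rho^{-1}\sin^2 q$, for which $\mathrm dq = \rho\,\mathrm dz/(2\sqrt{\rho z(1-\rho z)})$. The crucial algebraic input -- via $\kappa^2+4=(\rho+\rho^{-1})^2$ -- is that $z^2+\kappa z-1 = (z-\rho^{-1})(z+\rho)$, so that the cubic under $w$ factors with real roots $\{0,\rho^{-1},-\rho\}$, and one obtains the identity $z(\rho+z)(1-\rho z) = -\rho\, w(z)^2$. In particular $w(z)$ is purely imaginary on the segment $(0,\rho^{-1})$, and with the consistent branch choices $w(z)=i|w(z)|$ there and $\sqrt{2h-z}=i\sqrt{z-2h}$ on $(2h,\rho^{-1})$, the factors of $\sqrt\rho$ and $i$ cancel to give the clean 1-form identity
\[
  p\,\mathrm dq \;=\; \tfrac12\,\zeta(h).
\]

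It then remains to identify the cycle. At $q=\pi/2$ the substitution $q\mapsto z$ itself has a turning point and the orbit meets the branch point $z=\rho^{-1}$ of $\Gamma$, so lifting the real arc from the fundamental domain to the Riemann surface of $\Gamma$ produces a genuine closed cycle. For $h>0$ this cycle $\beta_+$ encircles the segment $[2h,\rho^{-1}]$ between the two branch points lying on that side; for $h<0$ the analogous cycle $\beta_-$ encircles $[0,\rho^{-1}]$, with $2h$ now in the complementary component between $-\rho$ and $0$. The choices and orientations match Figure~\ref{fig:contours}. Integrating the 1-form identity along the arc in the fundamental domain yields $\int p\,\mathrm dq = \tfrac12\oint_{\beta_\pm}\zeta(h)$, and inserting into $I(h)=\frac{1}{2\pi}\int p\,\mathrm dq$ produces the claimed identity $I_{\beta_\pm}(h)=\frac{1}{4\pi}\oint_{\beta_\pm}\zeta(h)$.

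The delicate point I expect to require the most care is the branch and cycle bookkeeping: tracking the signs of $w(z)$ and $\sqrt{2h-z}$ on the two sheets of $\Gamma$ so that all spurious factors of $i$ cancel, and verifying that the 2-to-1 nature of $q\mapsto z$ combined with the turn-around at $q=\pi/2$ lifts the real arc to exactly one full circuit of $\beta_\pm$ on $\Gamma$, producing precisely the factor of $\tfrac12$ between the arc integral and the full loop integral.
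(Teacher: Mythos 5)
Your treatment of the case $h>0$ is correct and essentially identical to the paper's argument: solve $H(q,p)=h$ for $p$, substitute $z=\rho^{-1}\sin^2 q$, use the factorisation $z(\rho+z)(1-\rho z)=-\rho\,w(z)^2$ so that $p\,\mathrm{d}q=\tfrac12\zeta$ with consistent branches, and observe that one traversal of the arc $q\in[q_1,q_2]$ lifts to exactly one circuit of the shrunken loop around the cut $[2h,\rho^{-1}]$, giving $I_{\beta_+}(h)=\frac{1}{4\pi}\oint_{\beta_+}\zeta$.

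The case $h<0$ contains a genuine gap. The scaled, discrete-symmetry-reduced action for $h<0$ is not $\frac{1}{2\pi}\int_0^\pi p\,\mathrm{d}q$: as Figure~\ref{fig:ActionAreas} specifies, it is the darker (red) area \emph{above} the rotation orbit, i.e.\ $\frac{1}{2\pi}\int_0^\pi(1-p)\,\mathrm{d}q$, the area between the orbit and the boundary $p=1$ (on the sphere, the area enclosed about the elliptic point at $L_3=\ell$). Correspondingly, the cycle $\beta_-$ of Figure~\ref{fig:contours} encircles the branch cut $(-\rho,2h)$, not $[0,\rho^{-1}]$; this is fixed later in the paper by $\Pi[\beta_-]=(-\rho,2h)$ and by the value $I_{\beta_-}(0)=\frac1\pi\tan^{-1}(\rho)$. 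The loop you construct around $[0,\rho^{-1}]$ is the auxiliary cycle the paper calls $\beta_*$, and your identity $\frac{1}{2\pi}\int_0^\pi p\,\mathrm{d}q=\frac{1}{4\pi}\oint_{\beta_*}\zeta$ is true but is not the lemma: it computes $\frac12-I_{\beta_-}(h)$ rather than $I_{\beta_-}(h)$. The missing step is to start from $I_{\beta_-}=\frac12-\frac{1}{4\pi}\oint_{\beta_*}\zeta$ and then use the homology relation on the Riemann surface between $\beta_*$, the cycle around the other cut, and a loop $\gamma_\infty$ around the poles of $\zeta$ at infinity; the non-zero residue there contributes exactly $\frac12$ after the $\frac{1}{4\pi}$ normalisation and cancels the constant coming from the $1\cdot\mathrm{d}q$ term, leaving $I_{\beta_-}=\frac{1}{4\pi}\oint_{\beta_-}\zeta$. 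As written, your $h<0$ formula would give $I_{\beta_-}(0)=\frac1\pi\tan^{-1}(\rho^{-1})$ instead of $\frac1\pi\tan^{-1}(\rho)$, contradicting $I_{\beta_+}(0)+I_{\beta_-}(0)=\frac12$ and the constants $\mathcal{A}_\pm$ that enter the symplectic invariant.
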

\begin{proof}
We derive the action using the Hamiltonian $H(q,p)$ in canonical variables $(q,p)$. 
Solve $H(q,p)= h$ for $p$ and integrating with respect to $q$ in order to find the 
lighter (blue) area in Fig.~\ref{fig:ActionAreas} gives
\[
I_{\beta_+}(h)
  =  \frac{1}{2\pi}\int_{q_1}^{q_2} \sqrt{-\frac{2 h - \rho^{-1} \sin^2 q}{\rho + \rho^{-1} \sin^2 q } }\,\mathrm{d}q
\]
where $q_1 = \pi/2-q_0$ and $q_2 = \pi/2 + q_0$ are the roots of the numerator of the integrand with  $q_0 = \cos^{-1}( \sqrt{ 2 h \rho})$.
Upon performing the change of integration variable $z= \rho^{-1} \sin^2(q)$ 
we arrive at the required integral
over the differential 1-form $\zeta(h)$ as defined above.
The new denominator $\left[w(z)\right]^{2} = z\left(z^{2}+\kappa z-1\right)=z(z+\rho)(z-\rho^{-1})$
depends only upon the single parameter $\kappa = \rho - \rho^{-1}$.
Note that on $\Gamma$ we find that $\zeta=\frac{u}{w^{2}}\,\mathrm{d}z$ is meromorphic,
so that we can equate the real integral over $z\in\left[q_1,q_2\right]\subset\mathbb{R}$
to an equivalent complex contour integral over $z\in\beta_{+}\subset\mathbb{C}$,
namely
\[
\int_{q_{0}}^{\pi-q_{0}}\zeta = \frac{1}{2}\oint_{\beta_{+}}\zeta \,.
\]
which can be evaluated by continuously shrinking $\beta_{+}$ so that
it sits entirely upon the real axis. 
We find that $\beta_{+}$ gives
two equal contributions as $z$ traverses each side of the branch
cut. Then, $q$ traversing the interval $[q_{1},q_{2}]$ once corresponds
to $z$ traversing the whole closed (shrunken) path $\beta_{+}$ once.
The complex closed loop integral around $\beta_+$ gives twice the real integral along the interval $\left[2h, \rho^{-1}\right]$,
and so altogether $I_{\beta_+} = \frac{1}{4\pi} \oint_{\beta_+} \zeta$.

A slightly more complicated argument applies in the case $h  < 0$.
To get the lighter shaded (red) area in Figure~\ref{fig:ActionAreas} the integrand is $(1-p)\,\mathrm{d}q$ 
instead of just $p\,\mathrm{d} q$
and the real integration interval is $q \in [0, \pi]$. 
Complexifying this integral gives the loop $\beta_*$ so that $I_{\beta_-} = \frac12-\frac{1}{4\pi} \oint_{\beta_*} \zeta$.
On the Riemann sphere we have $\beta_* + \beta_+ + \gamma_\infty = 0$
where $\gamma_\infty$ is a loop around the pole at infinity. Due to the non-zero residue of $\zeta$ at infinity the constant term cancels and we get $I_{\beta_-} = \frac{1}{4\pi} \oint_{\beta_-} \zeta$.
\end{proof}

\begin{remark}
The unscaled action $\tilde I$ as a function of the unscaled and unshifted energy $\Poih$
can be rewritten in the symmetric form 
\[
\tilde I_{\beta_{\pm}}( \Poih, \ell) =
    \frac{1}{2\pi} \oint_{\beta_{\pm}} \sqrt{- \frac{2\Poih - \ell^2  \tilde z}{\left(\tilde z-\Theta_1^{-1}\right)\left(\tilde z-\Theta_2^{-1}\right)\left(\tilde z-\Theta_3^{-1}\right)} }\, \mathrm{d} \tilde z 
       = 2 \ell I_{\beta_\pm}(h)
\,.
\]
The scaled action $I$ depends on $\Poih$ and $\ell$ only through 
$$h =\frac{\Poih-\tfrac 12 \Theta_2^{-1} \ell^2}{ \lambda \ell},$$ 
and on $\Theta_1,\Theta_2,\Theta_3$ only through $\kappa$.
In the transformed variable the roots $\tilde z = \Theta_1^{-1},\Theta_2^{-1},\Theta_3^{-1}$  correspond
to the roots $z=-\rho,0,\rho^{-1}$ of $w$, respectively.
\end{remark}

\subsection{Derivation of the Picard-Fuchs equation}

We now derive the Picard-Fuchs ODE of ${I}(h)$. The Abelian differential $\zeta$ 
lives on the complex  manifold $\Gamma$. By de Rham
cohomology theory, there must exist a relationship between derivatives
of $\zeta$ on $\Gamma$. In fact, we follow the route of Clemens
in \cite{Clemens1980} and find that there exists a linear combination
of the $h$ derivatives of $\zeta$ that equals a total differential. 
However, we cannot exploit the same simplifications as Clemens does
since our independent variable is fixed to be the energy $h$, 
because otherwise we would lose the connection 
to the Birkhoff normal form (see below).
A similar approach was taken in \cite{Dullin2001}.
The relation between the differentials is given in the  following Lemma:
\begin{lemma}
There exists a function $v(z)$ meromorphic on $\Gamma$ and coefficients $c_{i}$ such that
\begin{equation}
\sum_{i=0}^{3}c_{i}\frac{\mathrm{d}^{i}\zeta}{\mathrm{d}h^{i}}=\mathrm{d}v.\label{eq:lincombzeta}
\end{equation}
\end{lemma}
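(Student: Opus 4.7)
The plan is constructive and relies on differentiating $\zeta$ repeatedly and then finding the primitive $v$ by an ansatz. Direct computation of the derivatives of $\zeta = \sqrt{2h-z}\,dz/w(z)$ gives
\begin{equation*}
\frac{d^i \zeta}{dh^i} = \frac{a_i\,dz}{w(z)(2h-z)^{i-1/2}}, \qquad (a_0,a_1,a_2,a_3)=(1,1,-1,3),
\end{equation*}
so any linear combination admits a common denominator:
\begin{equation*}
\sum_{i=0}^{3} c_i \, \frac{d^i\zeta}{dh^i} = \frac{P(z;h,c)\,dz}{w(z)(2h-z)^{5/2}},
\end{equation*}
where $P$ is cubic in $z$ and linear in $(c_0,c_1,c_2,c_3)$, with coefficients polynomial in $h$.

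To produce an exact form of the same shape, I would postulate
\begin{equation*}
v(z;h) = \frac{A(z) + B(z)\sqrt{2h-z}}{w(z)(2h-z)^{3/2}},
\end{equation*}
with $A$ and $B$ polynomials whose degrees are left free for the moment. Differentiating with respect to $z$ introduces a term proportional to $w'(z)/w(z)^2$; this is removed using the defining relation $w(z)^2 = z(z^2+\kappa z-1)$, after which $dv$ is rewritten as a single fraction with denominator $w(z)(2h-z)^{5/2}$. Equating numerators converts \eqref{eq:lincombzeta} into a polynomial identity in $z$ that is linear in the unknowns $c_i$ together with the coefficients of $A$ and $B$. Choosing the degrees of $A$ and $B$ just large enough, a non-trivial kernel is forced by counting, and the minimal admissible degrees can be found by direct trial.

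An a priori existence argument is also available: the curve $\Gamma$ defined by $u^2 = (2h-z)\,z(z^2+\kappa z-1)$ has generic genus one, so the first algebraic de Rham cohomology of $\Gamma$ relative to the finite pole divisor of $\zeta,\dots,\zeta^{(3)}$ is finite-dimensional. Four forms cannot remain linearly independent in such a space, which forces a relation modulo exact differentials.

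The main obstacle is computational rather than conceptual: the $w'(z)/w(z)$ reduction on $\Gamma$ and the bookkeeping for the degrees of $A$ and $B$ generate many monomials, and in practice the identification of the coefficients $c_i$ and of the primitive $v$ is best delegated to a computer algebra system. The output is the explicit Picard--Fuchs operator and the function $v$, both of which feed directly into the Frobenius analysis of the next subsection.
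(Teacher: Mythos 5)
Your constructive route is essentially the paper's: the paper also reduces everything to the common denominator $w(z)(2h-z)^{5/2}$, but it short-circuits your general ansatz by simply taking $v=\dfrac{u}{(2h-z)^{2}}=\dfrac{w(z)}{(2h-z)^{3/2}}$ --- i.e.\ your ansatz with $B\equiv 0$ and $A=w(z)^{2}=z(z^{2}+\kappa z-1)$ --- computing $w(z)(2h-z)^{5/2}\,\mathrm{d}v=\bigl[(3h+\tfrac{\kappa}{2})z^{2}+(2\kappa h-1)z-h\bigr]\mathrm{d}z$ and matching against the cubic numerator of the linear combination, which yields the unique solution $c_{0}=0$, $c_{1}=3h+\tfrac{\kappa}{2}$, $c_{2}=12h^{2}+4\kappa h-1$, $c_{3}=\tfrac12 w(2h)^{2}$. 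Your derivative formulas for $\mathrm{d}^{i}\zeta/\mathrm{d}h^{i}$ are correct, and your linear-algebra version does close: with $B=0$ and $\deg A=3$ one has $8$ unknowns against $7$ coefficient equations, and a kernel element with all $c_{i}=0$ is impossible since it would force $v$ to be constant, so the explicit $c_{i}$ and $v$ come out of the same matching the paper performs; the paper's only real ``extra'' is writing the answer down, which is needed anyway to get the explicit Picard--Fuchs operator.

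Two caveats. First, the $B(z)\sqrt{2h-z}$ part of your ansatz is not meromorphic on $\Gamma$: on $\Gamma$ it equals $B(z)w(z)/u^{2}$, an odd power of $w$, i.e.\ a function on a further double cover. Parity in $\sqrt{2h-z}$ decouples it from the identity (the combination and the $A$-part carry half-integer powers of $2h-z$, the $B$-part integer powers), so its contribution is forced to vanish; still, you should drop it or phrase the ansatz in the coordinates $(z,u)$, because the lemma, and the subsequent step $\oint\mathrm{d}v=0$, require $v$ single-valued meromorphic on $\Gamma$. Second, your back-up cohomological count does not work as stated: puncturing the genus-one curve at the poles involved (the two points over $z=\infty$ and the point over $z=2h$) gives a first de Rham cohomology of dimension $2g+\#\{\text{punctures}\}-1=4$, so four classes can perfectly well be independent there. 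The correct quick argument is that the $h$-derivatives of $\zeta$ are residue-free (differentials of the second kind), hence span at most a $2g=2$-dimensional space modulo exact forms, which forces a relation among $\zeta',\zeta'',\zeta'''$ alone and also explains why $c_{0}=0$ in the explicit answer.
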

\begin{proof}
Observe that
\begin{eqnarray*}
w(z)(2h-z)^{\tfrac{5}{2}}\sum_{i=0}^{3}c_{i}\frac{\mathrm{d}^{i}\zeta}{\mathrm{d} h^{i}} 
& = & \left[-c_0z^3+ (6 c_0 h +c_1)z^2  + (-12 c_0 h^2 - 4 c_1 h + c_2)z \right.\\
 &  & \quad \left.+  ( 8c_0 h^3 + 4 c_1 h^2  -  2 c_2 h  +  3c_3)  \right] \mathrm{d} z \,.
\end{eqnarray*}
Now choose $v(z):=\dfrac{u}{(2h - z)^2} = \dfrac{w(z)}{(2h-z)^{\frac{3}{2}}}$, which is meromorphic on $\Gamma$, 
and has differential 
\[
w(z)(2h-z)^{\tfrac{5}{2}}\mathrm{d}v=\left[ (3 h + \tfrac12 \kappa)z^{2} + (2 \kappa h - 1)z - h \right]\mathrm{d}z.
\]
Equating the coefficients of the polynomials in $z$ and solving for $c_{i}$ yields the unique solution
\begin{eqnarray*}
c_{0} & = & 0\\
c_{1} & = & 3 h + \tfrac12 \kappa\\
c_{2} & = &  12 h^2 + 4 \kappa h - 1 \\
c_{3} & = & h( 4 h^2 + 2 h \kappa - 1) = \frac12\left[w( 2h )\right]^{2}
\end{eqnarray*}
and thus by construction we have proven the Lemma.
\end{proof}
Now we are ready to derive the linear and homogeneous Picard-Fuchs equation for the action $I(h)$:
\begin{theorem}
The scaled action $I(h)$ satisfies the Picard-Fuchs equation
\begin{equation}
\left[w(2 h)\right]^{2}I'''(h) + 2(12 h^2 + 4 \kappa h - 1 )I''(h)+(6 h +  \kappa )I'(h)=0 \label{eq:PFODEI}
\end{equation}
with the scaled energy $h$ as the independent variable.
\end{theorem}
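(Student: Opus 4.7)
The plan is to integrate the identity $\sum_{i=0}^{3} c_i\,\partial_h^i\zeta = dv$ of the preceding lemma over the cycles $\beta_\pm$, and then to pull the $h$-derivatives outside the cycle integral. Since $I(h) = \tfrac{1}{4\pi}\oint_{\beta_\pm}\zeta$ and the right-hand side is exact, this immediately yields a linear, homogeneous, third-order ODE for $I$ with coefficients $c_i(h)$.

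Two ingredients are required to justify this manoeuvre. First, the commutation
\[
\frac{d^{i}I}{dh^{i}}(h) = \frac{1}{4\pi}\oint_{\beta_\pm}\frac{\partial^i\zeta}{\partial h^i}
\]
is the standard Gauss--Manin observation for a smooth family of Riemann surfaces. Here the family is the affine elliptic curve $\Gamma_h\colon u^2 = -z(z-2h)(z-\rho^{-1})(z+\rho)$, which is smooth away from the three critical values $h\in\{-\rho/2,\,0,\,\rho^{-1}/2\}$ at which two branch points of the defining quartic collide. On the complementary open intervals the cycles $\beta_\pm$ are locally trivialised in the family, so differentiation under the integral sign is legitimate. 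Second, the function $v = u/(2h-z)^2$ is rational on $\Gamma_h$: it has a single triple pole at the branch point $(2h,0)$ and is bounded at infinity (using $u = \mathcal{O}(z^{2})$). Because the cycles in Fig.~\ref{fig:contours} encircle only the other branch points and avoid both $(2h,0)$ and infinity, $v$ is single-valued and holomorphic along $\beta_\pm$, so $\oint_{\beta_\pm}dv = 0$.

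Combining these two observations, integration of the lemma over $\beta_\pm$ produces $\sum_{i=0}^{3} c_i(h)\,I^{(i)}(h) = 0$. Substituting the values $c_0=0$, $c_1=3h+\tfrac{1}{2}\kappa$, $c_2=12h^{2}+4\kappa h-1$, $c_3=\tfrac{1}{2}[w(2h)]^{2}$ and clearing the factor $\tfrac{1}{2}$ yields the Picard--Fuchs equation of the statement. The only mildly delicate point is verifying consistency between the vanishing locus of the leading coefficient $[w(2h)]^{2} = 2h(2h-\rho^{-1})(2h+\rho)$ and the set of critical values of $h$ where the Gauss--Manin argument must be restarted; these agree exactly, which is the expected singular structure of an ODE obtained by this route. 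The same argument applies identically to $\beta_+$ (for $h>0$) and $\beta_-$ (for $h<0$), so both branches $I_{\beta_\pm}(h)$ solve the same third-order linear equation, as required for the Frobenius analysis that follows.
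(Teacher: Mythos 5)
Your proof is correct and takes essentially the same route as the paper: integrate the identity $\sum_{i} c_i\,\mathrm{d}^i\zeta/\mathrm{d}h^i = \mathrm{d}v$ of the preceding lemma over the cycles, differentiate under the integral sign using $\oint_{\beta_\pm}\zeta = 4\pi I$, and use that $\mathrm{d}v$ is the differential of a function meromorphic on $\Gamma$, so its closed-contour integrals vanish. (One small slip in your justification: for $h>0$ the cycle $\beta_+$ \emph{does} enclose the branch point $(2h,0)$ where $v$ has its pole, and similarly $\beta_-$ for $h<0$; this is harmless, since $v$ is single-valued on $\Gamma$ and the contour does not pass through the pole --- equivalently, the exact differential $\mathrm{d}v$ has vanishing residues --- so $\oint_{\beta_\pm}\mathrm{d}v = 0$ all the same.)
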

\begin{proof}
To obtain the Picard-Fuchs ODE, perform a closed complex contour
integral to both sides of equation \eqref{eq:lincombzeta}. By definition
we have $\oint\zeta=4\pi I$, and given $v$ is meromorphic on $\Gamma$ the residues of $\mathrm{d}v$ are vanishing,
so that the right hand side gives $\oint\mathrm{d}v=0$ for any closed integration path.
\end{proof}

The fact that the Picard-Fuchs equation is of third order is related to the fact the curve is elliptic (genus $g=1$)
and that the differential $\zeta(h)$ is of third kind with a single pole. Moreover, the derivative of the residue of 
the pole with respect to $h$ vanishes, so that the first $h$-derivative of $\zeta$ is a differential of 2nd kind 
on a genus $g=1$ curve.
Now $w(z)$ is independent of $h$ and further derivatives do not create additional poles.
So the order of the Picard-Fuchs equation is $2g + 1 = 3$.

\subsection{Solving the Picard-Fuchs equation}

Clearly \eqref{eq:PFODEI} is an ODE in $I'$ thus $I=k_{3}$ is a
constant solution. To lower the order we introduce the scaled period $T(h) = 2\pi I'(h)$, 
which has the first kind differential $\frac{\mathrm{d} z }u$ on $\Gamma$.

The scaled period $T(h)$ hence satisfies the second order linear homogeneous ODE
\begin{equation}
T''(h)+ 2 \frac{12 h^2 + 4 \kappa h - 1}{\left[w(2 h)\right]^{2}}T'(h)+  \frac{6 h+\kappa}{ \left[w(2 h)\right]^{2}}T(h)=0.\label{eq:PFODE for T(nu)}
\end{equation}

It is interesting to observe that the leading coefficient
$c_{3}$ is proportional to $\left[w(2 h)\right]^{2}$, and thus in normalising the ODE 
the roots of $w(2h)$ given by $2h \in\left\{ 0,-\rho,\rho^{-1}\right\} $
become the (regular) singular points of the Picard-Fuchs ODE. 
Thus the partial fraction decomposition of the coefficient of $T'$ simply is 
$$2\left(\frac1{2h} +\frac1{2h + \rho} + \frac1{2h - \rho^{-1}}\right).$$

We are interested in series solutions at the singular point corresponding to the unstable
equilibrium, namely $h=0$. The general theory and procedure
for solving \eqref{eq:PFODE for T(nu)} via the method of Frobenius
can, e.g., be found in \cite{Boyce2001}.
We seek series solutions of the form
\[
\sum_{n=0}^{\infty}a_{n}(\varrho)h^{n+\varrho}
\]
where $\varrho$ is a root of the indicial equation. At the finite
singular points the indicial equation is $\varrho^{2}=0$.

\begin{remark}
We find that the Picard-Fuchs ODE has a regular singular point at
$h=\infty$, confirming it is of Fuchsian type. The indicial equation
for the singular point at infinity is
\[
\varrho_{\infty}^{2}-2\varrho_{\infty}+\tfrac{3}{4}=0
\]
with roots $\varrho_{\infty}=\tfrac{1}{2},\,\tfrac{3}{2}$ differing by an integer.
\end{remark}

By substitution of the Frobenius series into the ODE, the recursion
relation for $a_{n}(\varrho)$ is found to be
\begin{equation} 
a_{n}(\varrho)=\frac { 2n+2\varrho-1}{(n+\varrho)^{2}}  \left( \frac{\kappa}{2} (2n+2\varrho-1) a_{n-1}(\varrho) + (2n+2\varrho-3)a_{n-2}(\varrho) \right)\label{eq:a_n rho}
\end{equation}

Setting $\varrho=0$ and $a_{n}:=a_{n}(0)$ yields the Frobenius expansion of the regular solution
\[
T_r(h)=\sum_{n=0}^{\infty}a_{n}h^{n}.
\]
with coefficients obtained from \eqref{eq:a_n rho} at $\varrho = 0$ as
\begin{equation}
a_{n}=\frac{2n-1}{n^2} \left( \frac{\kappa}{2} (2n-1)a_{n-1} + (2n-3)a_{n-2}\right).\label{eq:a_n}
\end{equation}
We solve this second order recursion relation for $a_{n}$.
Without loss of generality, normalise the initial condition $a_{0}:=1$,
and we require that $a_{-1}:=0$. Thus we find that the next few coefficients
are
\begin{eqnarray*}
a_{1} & = & \frac{\kappa}{2},\\
a_{2} & = & \frac{3}{16}\left(3\kappa^{2}+4\right),\\
a_{3} & = & \frac{5}{32}\kappa\left(5\kappa^{2}+12\right),\\
a_{4} & = & \frac{35}{1024}\left(35\kappa^{4}+120\kappa^{2}+48\right),\\
a_{5}&=&\frac{63 \kappa}{2048}  \left(63 \kappa ^4+280 \kappa ^2+240\right)\,.
\end{eqnarray*}

\begin{theorem} \label{thm:an}
The recursion for $a_n$ is solved by
$$
    a_n =\frac{1}{4^n} \binom{2n}{n} \sum_{k=0}^{\lfloor \frac n2 \rfloor} \binom {2n - 2k} {k, n-k, n - 2k} \left( \frac{\kappa}{2} \right)^{n - 2k}\\
$$
where $\binom{n}{i,j,k} = \frac{n!}{i!j!k!}$ with $n = i+j+k$ is the trinomial coefficient. 
\end{theorem}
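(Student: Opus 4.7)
The plan is to rescale the recursion into a cleaner three-term form and then verify the closed form by strong induction on $n$, reducing the induction step to a single multinomial identity. First I will set $x := \kappa/2$ and introduce $A_n := 4^n a_n / \binom{2n}{n}$. Using the two elementary ratios
$$\frac{\binom{2n}{n}}{\binom{2n-2}{n-1}} = \frac{2(2n-1)}{n}, \qquad \frac{\binom{2n}{n}}{\binom{2n-4}{n-2}} = \frac{4(2n-1)(2n-3)}{n(n-1)},$$
the recursion \eqref{eq:a_n} transforms into
$$n A_n = 2(2n-1)\, x\, A_{n-1} + 4(n-1)\, A_{n-2}, \qquad A_0 = 1,\ A_{-1} = 0,$$
and the target identity becomes $A_n = \sum_{k=0}^{\lfloor n/2\rfloor} M(n,k)\, x^{n-2k}$ with $M(n,k) := \binom{2n-2k}{k,\, n-k,\, n-2k}$.

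Next, assuming the closed form holds for $A_{n-1}$ and $A_{n-2}$, I will equate coefficients of $x^{n-2k}$ on both sides of the rescaled recursion. The only contribution to $x^{n-2k}$ in $xA_{n-1}$ comes from the $x^{n-1-2k}$ term of $A_{n-1}$, giving $M(n-1,k)$; similarly the relevant term in $A_{n-2}$ is $M(n-2,k-1)$, with $M(n,k)$ interpreted as $0$ outside the range $0 \le k \le n/2$. Hence the induction step reduces to the single identity
$$n M(n,k) = 2(2n-1)\, M(n-1, k) + 4(n-1)\, M(n-2, k-1),$$
which after substituting $M(n,k) = \frac{(2n-2k)!}{k!(n-k)!(n-2k)!}$ and clearing factorials collapses to the polynomial identity $2n(2n-2k-1) = 2(2n-1)(n-2k) + 4k(n-1)$, immediate by expansion.

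The only real obstacle is the factorial bookkeeping and the handling of the two boundary cases $k=0$ and (when $n$ is even) $k = n/2$, where one of the multinomial terms vanishes; both are settled by direct inspection. As a more structural sanity check, one can observe that the rescaled recursion is the Legendre three-term recursion $nP_n(y) = (2n-1)yP_{n-1}(y) - (n-1)P_{n-2}(y)$ transported by $y = ix$, so that $A_n(x) = (-2i)^n P_n(ix)$; the classical explicit expansion of $P_n$ together with the trinomial rearrangement $\binom{n}{k}\binom{2n-2k}{n} = \binom{2n-2k}{k,\, n-k,\, n-2k}$ then reproduces the claimed formula.
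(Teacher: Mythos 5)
Your proof is correct, and it takes a genuinely different route from the paper's. The paper does not prove Theorem~\ref{thm:an} directly: it deduces it as the $\varrho=0$ specialization of the subsequent Lemma, which constructs an approximate solution $\hat a_n(\varrho)$ of the full $\varrho$-dependent recursion \eqref{eq:a_n rho} modulo $\mathcal O(\varrho^2)$, proved by an induction involving Pochhammer-symbol bookkeeping (the quantities $d_m$, $G_m$, $S_{i,j}$); the extra generality is needed there because the paper also wants $b_n = a_n'(0)$. You instead work at $\varrho=0$ from the start, and your rescaling $A_n = 4^n a_n/\binom{2n}{n}$ is a genuine simplification: it converts \eqref{eq:a_n} into the clean three-term recursion $nA_n = 2(2n-1)xA_{n-1} + 4(n-1)A_{n-2}$ with $x=\kappa/2$, so the induction step collapses to the single trinomial identity $nM(n,k) = 2(2n-1)M(n-1,k) + 4(n-1)M(n-2,k-1)$, which after clearing factorials is the elementary polynomial identity you state (I checked it, including the boundary cases $k=0$ and $k=n/2$, which indeed work with the convention $M=0$ outside range). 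Your Legendre-polynomial identification $A_n(x) = (-2i)^n P_n(\mathrm i x)$ is also correct and is a nice structural explanation the paper does not give; it would even yield the closed form directly from the classical expansion of $P_n$, and incidentally offers an independent route to the radius-of-convergence discussion in the convergence section. What your argument does not provide is the $\mathcal O(\varrho^2)$-accurate formula $\hat a_n(\varrho)$, which the paper needs later to differentiate and obtain the explicit formula for $b_n$; so as a proof of this theorem yours is shorter and more elementary, while the paper's detour through $\hat a_n(\varrho)$ buys the subsequent theorem on $b_n$ for free.
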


The proof of this theorem will be given later as a special case of the (approximate) solution of the more general recursion for $a_n(\varrho)$.

\begin{remark}
It is interesting to note that the sum $a_n$ can be summed to the hypergeometric function $_2F_1$
$$
a_n=8^{-n} \left(\frac{(2n)!}{(n!)^2}\right)^2 \kappa ^n\, _2F_1\left(- \frac{n-1}{2},-\frac{n}{2};-\frac{2n-1}{2};-\frac{4}{\kappa ^2}\right)
$$
which is always terminating because $n$ is an integer. 
It is well known that the complete elliptic integral of first kind $T$ can be expressed in terms of the hypergeometric function
as the function of the modulus of the elliptic curve, 
while here the coefficients of the Taylor series of $T_r(h)$ are given by the hypergeometric function.
\end{remark}

Since we have repeated indicial roots, we expect the second independent solution to be singular.
The general theory (see for example  \cite{Boyce2001}) says that the singular solution 
is of the form
\[
T_{s}(h):=T_{r}(h)\log h+\sum_{n=1}^{\infty}b_{n}h^{n}
\]
where
 $b_{n}:=\left.\tfrac{\mathrm{d}a_{n}(\varrho)}{\mathrm{d}\varrho}\right|_{\varrho=0}$.
The recursion relation for the $b_{n}$ at $\varrho = 0$ is thus given by
\begin{equation}
    b_{n}=\frac{(2n-1)\left(\kappa a_{n-1}+\tfrac{\kappa}2 n(2n-1)b_{n-1}+n(2n-3)b_{n-2}\right)+(8n-6)a_{n-2}}{n^{3}}.\label{eq:b_n}
\end{equation}
Along with the initial conditions on the $a_{n}$, we  also impose
that $b_{-1}:=0$ and $b_{0}:=0$. The first few coefficients are
\begin{eqnarray*}
b_{1} & = & \kappa,\\
b_{2} & = & \frac{1}{16}\left(21\kappa^{2}+20\right),\\
b_{3} & = & \frac{1}{96}\kappa\left(185\kappa^{2}+372\right),\\
b_{4} & = & \frac{1}{6144}\left(18655\kappa^{4}+56760\kappa^{2}+18672\right),\\
b_{5}&=&\frac{1}{20480}\kappa \left(102501 \kappa ^4+416360 \kappa ^2+313680\right) \,.
\end{eqnarray*}

We were not able to find an explicit solution for $a_n(\varrho)$. 
However, since we only need the derivative of $a_n(\varrho)$ at $\varrho = 0$ it 
is enough to find an approximate solution $\hat a_n(\varrho)$ that is valid up to terms of $\mathcal{O}(\varrho^2)$.
Using this we obtain an explicit formula for $b_n = a_n'(0) = \hat a_n'(0)$.

\begin{lemma}
The recursion for $a_n(\varrho)$ given by \eqref{eq:a_n rho} is approximately solved by 
\[
\hat a_n(\varrho) =2^n \kappa^n n! \frac{\left(\varrho+\frac12\right)_n}{(\varrho+1)_n^2} \sum_{k=0}^{ \lfloor \frac n2 \rfloor } 
      \frac{\left(\varrho+\frac12\right)_{n-k}}{(n-2k)! k!} \kappa^{n-2k}
\]
to leading order in $\varrho$, where $(x)_n$ is the Pochhammer symbol.
\end{lemma}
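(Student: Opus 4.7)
The plan is to prove the formula by strong induction on $n$, showing $a_n(\varrho) = \hat a_n(\varrho) + \mathcal{O}(\varrho^2)$. The base cases $n = 0, 1$ follow by direct substitution. For the inductive step I would factor the ansatz as $\hat a_n(\varrho) = P_n(\varrho)\, S_n(\varrho)$ with Pochhammer prefactor $P_n(\varrho) := 2^n\, n!\,(\varrho+\tfrac12)_n / (\varrho+1)_n^2$ and polynomial factor $S_n(\varrho) := \sum_{k=0}^{\lfloor n/2 \rfloor} (\varrho+\tfrac12)_{n-k}\,\kappa^{n-2k}/\bigl((n-2k)!\,k!\bigr)$, so that the telescoping ratio
\[
\frac{P_n(\varrho)}{P_{n-1}(\varrho)} = \frac{n(2n+2\varrho-1)}{(n+\varrho)^2}
\]
absorbs the $\varrho$-dependent coefficients appearing on the right-hand side of \eqref{eq:a_n rho}.

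After this simplification, the recursion \eqref{eq:a_n rho} for $a_n = P_n S_n$ reduces to a much cleaner recursion that $S_n$ must satisfy, namely
\[
S_n(\varrho) = \frac{\kappa(2n+2\varrho-1)}{2n}\,S_{n-1}(\varrho) + \frac{(n+\varrho-1)^2}{n(n-1)}\,S_{n-2}(\varrho).
\]
Equating coefficients of $\kappa^{n-2k}$ on both sides, and using $(\varrho+\tfrac12)_{n-k} = (\varrho+n-k-\tfrac12)\,(\varrho+\tfrac12)_{n-k-1}$ to extract a common Pochhammer factor, the identity collapses, for each admissible $k$, to the polynomial statement
\[
(2n+2\varrho-1)(n-2k)(n-1) + 2k(\varrho+n-1)^2 \;\equiv\; n(n-1)(2n-2k+2\varrho-1) \pmod{\varrho^2}.
\]
A short expansion shows that the two sides agree at $\varrho = 0$ (recovering and reusing Theorem \ref{thm:an}) and that their first $\varrho$-derivatives also agree, so the identity holds modulo $\varrho^2$, which is the heart of the argument.

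Finally, since \eqref{eq:a_n rho} is linear with coefficients rational in $\varrho$ and regular at $\varrho=0$, an $\mathcal{O}(\varrho^2)$ error in $a_{n-1}$ and $a_{n-2}$ propagates to an $\mathcal{O}(\varrho^2)$ error in $a_n$; combined with the $\mathcal{O}(\varrho^2)$ residual established above, induction delivers $a_n(\varrho) - \hat a_n(\varrho) = \mathcal{O}(\varrho^2)$, which is exactly the sense in which the lemma is claimed. This is enough to read off both $a_n = \hat a_n(0)$ in \eqref{eq:a_n} and $b_n = \hat a_n'(0)$ in \eqref{eq:b_n}. The main obstacle is not conceptual but book-keeping: tracking the boundary contributions $k=0$ arising from the $S_{n-1}$ term and $k = \lfloor n/2 \rfloor$ arising from the $S_{n-2}$ term when matching each power of $\kappa$, so that the single polynomial identity above really captures the full content of the recursion uniformly in $k$.
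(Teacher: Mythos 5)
Your proposal is correct and takes essentially the same route as the paper: an induction on $n$ in which the Pochhammer ansatz is substituted into the recursion and checked power-by-power in $\kappa$, reducing to a polynomial identity that holds modulo $\varrho^2$ (your displayed identity is exactly the content of the paper's computation, which shows the residual equals $\varrho^2$ times a nonvanishing factor), followed by noting that the linear recursion with coefficients regular at $\varrho=0$ propagates $\mathcal{O}(\varrho^2)$ errors. The only differences are cosmetic normalisations --- you divide out the prefactor $P_n(\varrho)$, whereas the paper rescales $a_m(\varrho)$ by $(2m+2\varrho+1)$ and works with the ratios of its $G_m$ and $S_{i,j}$ factors --- so both arguments hinge on the same per-$k$ identity and the same induction.
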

Note that this formula reduces to the explicit formula for $a_n$ given earlier 
when $\varrho = 0$ using the identity $\left(\frac12\right)_n = \frac{(2n-1)!}{(n-1)!2^{2n-1}}$.  
Hence the following proof will also prove Theorem~\ref{thm:an}.

\begin{proof}
We will show that $a_{n}(\varrho)=\hat{a}_{n}(\varrho)+\mathcal{O}(\varrho^{2})$
by induction. From the explicit recursion we find 
\[
a_{1}(\varrho)=\frac{(1+2\varrho)^{2}}{2(1+\varrho)^{2}}\kappa
\]
and 
\[
a_{2}(\varrho)=\frac{(1+2\varrho)(3+2\varrho)}{(2+\varrho)^{2}}+\frac{(1+2\varrho)^{2}(3+2\varrho)^{2}}{4(1+\varrho)^{2}(2+\varrho)^{2}}\kappa^{2}.
\]
One can easily check that $a_{1}(\varrho)=\hat{a}_{1}(\varrho)$,
Furthermore, the coefficients of $\kappa^{2}$ in $a_{2}(\varrho)$
and $\hat{a}_{2}(\varrho)$ coincide and the constant and linear coefficients
of $a_{2}(\varrho)$ and $\hat{a}_{2}(\varrho)$ are both 
\[
\tfrac{3}{4}+\tfrac{5}{4}\varrho+\mathcal{O}(\varrho^{2}).
\]
Now assume that the identity $a_{n}(\varrho)=\hat{a}_{n}(\varrho)+\mathcal{O}(\varrho^{2})$
holds for all $1\le n\le m-1$ for some fixed $2\le m\in\mathbb{N}$.
Now we are going to show that under these assumptions, $a_{m}(\varrho)=\hat{a}_{m}(\varrho)+\mathcal{O}(\varrho^{2})$
holds. 

We substitute the formulas $\hat{a}_{m-1}(\varrho)$
and $\hat{a}_{m-2}(\varrho)$ into the right hand side of \eqref{eq:a_n rho}
with $n=m$ and need to verify that $a_{m}(\varrho)$ so obtained
is equal to $\hat{a}_{m}(\varrho)+\mathcal{O}(\varrho^{2})$.

In order to simplify the recursion define $d_{m}:=a_{m}(\varrho)(2m+2\varrho+1)$ so
that 
\[
d_{m}=g_{m}(\tfrac{\kappa}{2}d_{m-1}+d_{m-2}),\quad\text{where }g_{m}:=\frac{(2m+2\varrho+1)(2m+2\varrho-1)}{(m+\varrho)^{2}}\,.
\]
Using the formula for $\hat{a}_{m}$ and the above definition of $d_{m}$
we define
\begin{equation}
\hat{d}_{m}:=\hat{a}_{m}(\varrho)(2m+2\varrho+1)=G_{m}(\varrho)\sum_{k=0}^{\left\lfloor \tfrac{m}{2}\right\rfloor }S_{m,m-2k}(\varrho)\kappa^{m-2k},
\label{eq:d_m_hat}
\end{equation}
where 
\[
G_{m}(\varrho):=\frac{2^{m+1}m!\left(\varrho+\tfrac{1}{2}\right)_{m+1}}{(\varrho+1)_{m}^{2}},\quad S_{i,j}(\varrho):=\frac{\left(\varrho+\tfrac{1}{2}\right)_{\tfrac{i+j}{2}}}{\left(\tfrac{i-j}{2}\right)!j!}\,.
\]
The equivalent claim is that $d_{m}  =  \hat{d}_{m}+\mathcal{O}(\varrho^{2})$, which implies
\begin{equation} \label{eq:d_m_recursion}
\hat{d}_{m} = g_{m}(\tfrac{\kappa}{2}\hat{d}_{m-1}+\hat{d}_{m-2})+\mathcal{O}(\varrho^{2}).
\end{equation}
Inserting the series \eqref{eq:d_m_hat} for $\hat{d}_m$ and collecting powers of $\kappa$
gives 
\[
G_{m}(\varrho)S_{m,m-2k}(\varrho)=g_{m}\left(\tfrac{1}{2}G_{m-1}(\varrho)S_{m-1,m-2k-1}(\varrho)+G_{m-2}(\varrho)S_{m-2,m-2k}(\varrho)\right)+\mathcal{O}(\varrho^{2})
\]
for $k = 0, \dots, \left\lfloor \tfrac{m}{2}\right\rfloor$.
Dividing this equation by $G_{m-1}(\varrho)S_{m-2,m-2k}(\varrho)$
and using the identities 
\[
\frac{G_{m}(\varrho)}{G_{m-1}(\varrho)}=\frac{m(2m+2\varrho+1)}{(m+\varrho)^{2}}=\frac{m}{2m+2\varrho-1}g_{m}
\]
and 
\[
\frac{S_{m,m-2k}(\varrho)}{S_{m-2,m-2k}(\varrho)}=\frac{2m-2k+2\varrho-1}{2k},\quad\frac{S_{m-1,m-2k-1}(\varrho)}{S_{m-2,m-2k}(\varrho)}=\frac{m-2k}{k},
\]
we have after some simplification
\[
\frac{m}{2m+2\varrho-1}=1-\frac{(m+\varrho-1)^{2}}{(m-1)(2m+2\varrho-1)}-\frac{\mathcal{O}\left(\varrho^{2}\right)}{G_{m-1}(\varrho)S_{m-2,m-2k}(\varrho)}.
\]
Multiplication by $(m-1)(2m+2\varrho-1)$  gives
\[
\varrho^{2}=-\frac{(m-1)(2m+2\varrho-1)}{G_{m-1}(\varrho)S_{m-2,m-2k}(\varrho)}\mathcal{O}\left(\varrho^{2}\right).
\]
The coefficient of $\mathcal{O}\left(\varrho^{2}\right)$ in this equation
evaluated at $\varrho=0$ is 
\[
-\frac{(m-1)(2m-1)}{G_{m-1}(0)S_{m-2,m-2k}(0)}=-\frac{\left(2m^{2}-3m+1\right)(k-1)!(m-1)!(m-2k)!}{2^{m}\left(\frac{1}{2}\right)_{m}\left(\frac{1}{2}\right)_{m-k-1}},
\]
which is non-zero for all  $k = 0, \dots, \left\lfloor \frac{m}{2}\right\rfloor$ and $m\ge2$,
and hence the the Taylor series of the factor multiplying $\mathcal{O}\left(\varrho^{2}\right)$
has a non-zero constant term. This shows that for each power of $\kappa$
in \eqref{eq:d_m_recursion} the estimation holds, and thus it holds for
the whole finite series. Hence by mathematical induction the Lemma is proved.
\end{proof}

With the previous Lemma it is now straightforward to find an explicit formula for the coefficients $b_n$.
\begin{theorem}
The recursion relation for $b_n$ is solved by 
\[
    b_n =\frac{1}{4^n} \binom{2n}{n} \sum_{k=0}^{\lfloor \frac n2 \rfloor} \binom {2n - 2k} {k, n-k, n - 2k} f_{n,k} \left( \frac{\kappa}{2} \right)^{n - 2k}\\
\]
where 
\[
   f_{n,k}:=  2 \mathrm O_n + 2 \mathrm O_{n-k} - 2 \mathrm H_n
\]
and $\mathrm H_n$ is the Harmonic number and $\mathrm O_n$ its odd cousin defined by 
\[
   \mathrm H_n := \sum_{k=1}^n \frac1k, \qquad  \mathrm O_n := \sum_{k=1}^n \frac{1}{2 k - 1} \,.
\]
\end{theorem}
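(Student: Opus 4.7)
The plan is to leverage the preceding Lemma so that the work reduces to differentiating an explicit formula. Since $a_n(\varrho) = \hat a_n(\varrho) + \mathcal{O}(\varrho^2)$, differentiating both sides and evaluating at $\varrho = 0$ gives $b_n = a_n'(0) = \hat a_n'(0)$. So I only need to compute the $\varrho$-derivative of the explicit formula for $\hat a_n(\varrho)$ at $\varrho = 0$.

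To do this, write $\hat a_n(\varrho) = \sum_{k=0}^{\lfloor n/2\rfloor} T_k(\varrho)$, where each summand is the product of the $k$-independent prefactor $\frac{2^n n! (\varrho+1/2)_n}{(\varrho+1)_n^2}$ and the $k$-dependent factor $\frac{(\varrho+1/2)_{n-k}}{(n-2k)!\,k!}$, times the appropriate power of $\kappa$ (which is $\varrho$-independent and therefore inert under the derivative). Each $T_k(\varrho)$ is thus a rational product of Pochhammer symbols, so logarithmic differentiation gives $T_k'(0) = T_k(0) \cdot L_k$, where $L_k$ is the sum of logarithmic derivatives of the constituent Pochhammer factors at $\varrho = 0$.

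The logarithmic derivatives are computed from the standard identity $\frac{d}{d\varrho} \ln(\varrho + a)_m \big|_{\varrho=0} = \psi(a+m) - \psi(a)$, together with the digamma evaluations $\psi(m+1) - \psi(1) = \mathrm H_m$ and $\psi(m+\tfrac12) - \psi(\tfrac12) = 2\mathrm O_m$. This gives $2\mathrm O_n$ from the factor $(\varrho+1/2)_n$, $-2\mathrm H_n$ from $(\varrho+1)_n^{-2}$, and $2\mathrm O_{n-k}$ from $(\varrho+1/2)_{n-k}$, which sum to exactly $L_k = f_{n,k}$.

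It then remains to identify $T_k(0)$. By Theorem~\ref{thm:an} (equivalently, by the evaluation $\hat a_n(0) = a_n$ noted after the approximation Lemma), the values $T_k(0)$ coincide, term by term in $\kappa$, with the summands $\frac{1}{4^n}\binom{2n}{n}\binom{2n-2k}{k,n-k,n-2k}(\kappa/2)^{n-2k}$ of the closed form for $a_n$. Substituting these into $b_n = \sum_k T_k(0)\,f_{n,k}$ yields the stated formula. The only mildly fiddly step is the bookkeeping needed to rewrite Pochhammer products via $(\tfrac12)_m = (2m)!/(4^m m!)$ as multinomial coefficients, but this reassembly is exactly the one already performed in showing that $\hat a_n(0)$ reduces to the expression in Theorem~\ref{thm:an}, so nothing new is required. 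No convergence issues arise as all sums are finite.
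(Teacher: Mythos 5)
Your proposal is correct and follows essentially the same route as the paper: invoke the approximation lemma to reduce $b_n=a_n'(0)$ to $\hat a_n'(0)$, take logarithmic derivatives of the Pochhammer factors via the digamma function (the paper phrases the same evaluations through half-integer Harmonic numbers, $\mathrm H_{n-\frac12}=2\mathrm O_n-2\log 2$, which cancel the $\log 2$ terms exactly as your $\psi(m+\tfrac12)-\psi(\tfrac12)=2\mathrm O_m$ does), obtaining the correction factor $f_{n,k}=2\mathrm O_n+2\mathrm O_{n-k}-2\mathrm H_n$ multiplying each $\kappa^{n-2k}$ coefficient of $a_n$. No gaps; the identification of the $\varrho=0$ summands with those of Theorem~\ref{thm:an} is the same reassembly the paper uses.
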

\begin{proof}
Using the previous Lemma we can simply differentiate $\hat a_n(\varrho)$ and evaluate 
at 0 in order to get $b_n$. The derivative of the Pochhammer function is given 
in terms of the digamma function $\psi$, which for integer and half-integer values $n$ can be 
expressed in terms of the Euler-Mascheroni constant $\gamma$ and the Harmonic numbers,
$\psi(n) = -\gamma + \mathrm H_{n-1}$, where $\mathrm H_n = \sum_{k=1}^n \frac1k$. Using 
the recursion $\mathrm H_n = \mathrm H_{n-1} + \frac1n$ and $\mathrm H_{\frac12} = 2 - 2 \log 2$
the Harmonic number is thus also defined for half integers;
explicitly  $\mathrm H_{n-\frac12} = 2 \mathrm O_n + \mathrm H_{-\frac12}$.
Denote by $\hat a_n^k(\varrho)$ the coefficient of $\kappa^{n - 2k}$ in $\hat a_n(\varrho)$.
The logarithmic derivative of $\hat a_n^k(\varrho)$ at $\varrho = 0$ can thus be found as
\[
   \frac{{ a_n^k}{}'(0)}{ a_n^k(0)} = \frac{{\hat a_n^k}{}'(0)}{\hat a_n^k(0)} = \mathrm H_{n-\frac12} + \mathrm H_{n-k-\frac12} - 2 \mathrm H_n + 4 \log 2 = 2 \mathrm O_n + 2 \mathrm O_{n-k} - 2 \mathrm H_n
\]
and this determines the ``correction factor'' $f_{n,k}$ for the coefficient of $\kappa^{n-2k}$ in $b_n$.
\end{proof}

To obtain the solutions  $I_r(h)$ and $I_s(h)$ of the Picard-Fuchs equation 
we integrate $T_r(h)$ and $T_s(h)$ term-by-term, respectively, and get
\begin{subequations}\label{eq:IrIs}
\begin{eqnarray}
 I_{r}(h) & := &\frac1{2\pi} \int T_{r}(h)\mathrm{\, d}h=\frac1{2\pi} \sum_{n=0}^{\infty}\frac{a_{n}}{n+1}h^{n+1}\label{eq:I_r}\\
 I_{s}(h) & := & \frac1{2\pi} \int T_{s}(h)\,\mathrm{d}h\\
& = &  I_r(h) \log h + \frac1{2\pi} \sum_{n=0}^\infty \frac{1}{n+1}\left\{ b_n - \frac{a_n}{n+1}\right\} h^{n+1}
\end{eqnarray}
\end{subequations}
where the integration constants are fixed by the requirements that $I_r(0) = 0$ and $I_s(h) \to 0$ as $h\to0$.

\subsection{Particular action integrals}

Since \eqref{eq:PFODEI} is a linear third order equation, there must be
three linearly independent solutions. They are: the regular, singular,
and constant solutions. Thus the general solution is an arbitrary
linear combination of these, namely
\begin{equation}
I(h)  =  k_{1}I_{r}(h)+k_{2}I_{s}(h)+k_{3},\label{eq:arblincom}
\end{equation}
and upon differentiating 
\[
T(h)= k_1 T_{r}(h)+ k_2 T_{s}(h).
\]
We seek to find the $k_{i}$ that give the particular solutions
corresponding to the closed loops integrals along the paths $\beta_\pm$
as specified in Figure~\ref{fig:contours}.
The expansions obtained are normalised such that
$T_r = 1 + \mathcal O(h)$,
$2\pi I_r = h + \mathcal O(h^2)$,
$T_s = \log h + \mathcal O(h)$,
$2\pi I_s = h \log h + \mathcal O(h)$, 
and so the leading terms are 
$2\pi I(h) = 2\pi k_3 + k_2 h \log h + k_1 h + \mathcal O(h^2)$ and 
$T(h) =  k_1 +  k_2 \log h + \mathcal O(h)$.
Thus the constant $k_3$ is given by $I(0)$,
while both $k_1$ and $k_2$ are given by the leading order behaviour of $T(h)$.
When $T(h)$ is finite for $h\to 0$ then $k_2 = 0$ and $k_1$ is determined by $T(0)$.
Otherwise the leading order logarithmically diverging term and the constant term of $T(h)$ for small $h$
determine $k_1$ and $k_2$.

The four particular solutions to be found are given by the integrals
 $I_{\beta_{\pm}}$ and
 $T_{\beta_{\pm}}$.
In order to find the correct linear combinations we need to evaluate these integrals in the limit $h\rightarrow0$.

The $\beta$ integrals at $h=0$ are computed as real integrals.
The $\beta$ cycles are shrunk down, so that we are integrating along the real intervals 
$\Pi\left[\beta_+\right]:=\left(2h,\rho^{-1}\right)$ and
$\Pi\left[\beta_-\right]:=\left(-\rho,2h\right)$.
$I_{\beta_\pm}(0)$ is an elementary and finite real integral that gives
\begin{eqnarray*}
I_{\beta_{\pm}}(0) & = & \frac{1}{4\pi}  \oint_{\beta_{\pm}}\frac{ \mathrm{d}z}{\sqrt{1-\kappa z-z^2}} \\
& = & \frac{1}{2\pi} \int_{\Pi\left[\beta_{\pm}\right]}\frac{ \mathrm{d}x}{\sqrt{(x+\rho)(\rho^{-1}-x)}}=\frac1{\pi} \tan^{-1}\left(\rho^{\mp1}\right) \,.
\end{eqnarray*}
For the singular integral $T_{\beta_\pm}(h)$, the asymptotic behaviour for small $h$ is
\begin{eqnarray*}
T_{\beta_{\pm}}(h) & = & \oint_{\beta_{\pm}}\frac{\mathrm{d}z}{2w(z)\sqrt{2h-z}}=\int_{\Pi\left[\beta_{\pm}\right]}\frac{1}{\sqrt{x(x-2h)}}\frac{1}{\sqrt{1-\kappa x-x^2}}\,\mathrm{d}x\\
 & = &\pm\log\left(\pm h\right)\mp\frac12\log\left(\frac{64}{\kappa^2+4}\right)+\mathcal{O}(h).
\end{eqnarray*}
This can be shown as follows.
Define $\varphi(x):=[1-\kappa x-x^2]^{-\tfrac{1}{2}}=[(x+\rho)(\rho^{-1}-x)]^{-\tfrac{1}{2}}$
and split the integrals up as 
\[
T_{\beta_{\pm}}(h)=\int_{\Pi\left[\beta_{\pm}\right]}\frac{\varphi(x)-\varphi(0)}{\sqrt{x(x-2h)}}\,\mathrm{d}x+\int_{\Pi\left[\beta_{\pm}\right]}\frac{\varphi(0)}{\sqrt{x(x-2h)}}\,\mathrm{d}x.
\]
The first integral is a convergent elliptic integral, and when $h=0$ it becomes elementary. 
For $\beta_+$ it gives $-\log\left(\tfrac{4\rho^2}{\rho^{2}+1}\right)$, 
and for $\beta_-$ it gives $ \log \left(\tfrac{4}{\rho ^2+1}\right)$.
The second integral is divergent when $h\to0$ but elementary and can be integrated 
using hyperbolic trigonometric substitutions.
For $\beta_+$ it gives $  -\log\left(2\rho^{-1}\right)+\log h+\mathcal{O}(h)$, 
and  for $\beta_-$ it gives $ \log\left(2\rho\right)-\log(- h)+\mathcal{O}(h)$.
Adding the two integrals gives the stated result.

From these 4 integrals the coefficients $k_i$ can be determined as described above
and the result is
\begin{equation}
I_{\beta_{\pm}}  =\mp\frac12\log\left(\frac{64}{\kappa^2+4}\right)I_{r}\pm I_{s}+\frac1{\pi}\tan^{-1}\left(\rho^{\mp1}\right).\label{eq:Ibeta}
\end{equation}

\begin{remark}
Notice that the actions related to approaching the separatrix from either side satisfy $I_{\beta_+}(0)+I_{\beta_-}(0)=\tfrac12$,
corresponding to the fact that the total area of the symmetry reduced scaled phase space is $\pi$.
This area can also be seen as the residue at infinity of $I(h)$
  $$
2\pi\left(I_{\beta_+}(0)+I_{\beta_-}(0)\right)=\frac12 \cdot 2\pi \mathrm i \underset{\infty}{\text{Res}}\left\{ \zeta(h)\right\}=\pi.
  $$
As mentioned earlier the actual area (without discrete symmetry reduction) enclosed by a single connected contour  of $\PoissonHam$ is twice as large, and each connected component appears twice. 
Undoing the scaling then gives $4\pi \ell^2$ which is the area of the sphere $\mathcal{C}_\ell$ which is the 
($SO(3)$ reduced) phase space of the Euler top.
 \end{remark}

\section{The symplectic invariants}

Equipped with the Frobenius series expansions of the action integrals
obtained from the Picard-Fuchs ODE, we can now calculate the semi-global
symplectic invariants of the Euler top.

\subsection{Revisiting the Birkhoff normal form}
The Birkhoff normal form is a series for $h(J)$. From \eqref{eq:I_r} we instead have a series for 
the regular action in terms of the energy
$$2\pi I_r(h)=  h + \frac{\kappa}{4} h^2 + \frac{1}{16} \left(3 \kappa ^2+4\right)h^3 + \dots$$ 
The regular action can be obtained by 
 integrating $\zeta$ over the $\alpha$ cycles
\begin{equation}
 I_{\alpha}(h):=\frac{\mathrm i}{2\pi}\oint_{\alpha}\zeta(h)= h + \frac{\kappa}{4} h^2 + \frac{1}{16} \left(3 \kappa ^2+4\right)h^3 + \dots \label{eq:Ialpha}
\end{equation}
where the series can be obtained by Taylor expansion and taking residues.
We can omit the subscript $\pm$ for $\alpha$ since the two cases yield the same series expansion. 
The relation between the action integral and the regular Frobenius series thus is
$$
I_{\alpha}(h)= 2\pi I_r(h).
$$
Thus by inverting this series we recover the Birkhoff normal form, so that we can identify $ I_\alpha = J$.
The fact that the Birkhoff normal form at a hyperbolic point is given 
by the integral of the $\alpha$ cycles is a general phenomenon, see \cite{Dullin2013} for a general proof.
The  idea is that this works in a way similar to an elliptic point. Near such a point the action is given by a closed 
loop integral over a periodic orbit which depends on the energy. Thus the action is obtained as a function of the energy
and inverting this function gives the energy as a function of the action. Now the Birkhoff normal form  is  a form of 
the Hamiltonian that depends on a single variable only and by uniqueness of this function we can identify it with the 
inverse of the action function. A similar type of argument works near a hyperbolic point, see \cite{Dullin2013} for the details.

\subsection{The semi-global symplectic invariant}

The semi-global symplectic invariant 
 $\sigma(J)$ is the power series given by  
the regular part of the composition of the singular action integral with the inverse of
the regular action integral. More precisely there are two cases, depending on the sign of $h$:
\begin{equation}
 2\pi \left(I_{\beta_{\pm}}\circ  I_{\alpha}^{-1}\right)(J)  =   \mathcal A _{\pm} \pm J\log\left( \pm J\right) \mp J \mp \sigma(J)  .
\label{eq:IbIa}
\end{equation}
Here $\mathcal A _\pm$ is the area enclosed by the separatrix after discrete symmetry reduction, 
$\mathcal A _{\pm} =2 \pi I_{\beta_{\pm}}(0) = 2 \tan^{-1}\left( \rho^{\mp1}\right)$ 
so that $\mathcal A _+ + \mathcal A _- = \pi$.
Thus we obtain the semi-global symplectic invariant $\sigma$:
\begin{theorem}
The semi-global symplectic invariant of the Euler top with distinct moments of inertia 
reduced by discrete symmetry near the hyperbolic equilibrium
is defined via \eqref{eq:IbIa} and is given by
\begin{eqnarray*}
\sigma(J) & = &\frac12\log\left(\frac{64}{\kappa^2+4}\right)J
-\frac{3\kappa}{8}J^{2}
-\frac{15\kappa^{2}+32}{96}J^{3}
 -\frac{5\kappa\left(11\kappa^{2}+36\right)}{512}J^{4}
 \\ &&
 -\frac{945\kappa^{4}+4200\kappa^{2}+2672}{10240}J^{5}
 -\frac{7\kappa\left(527\kappa^{4}+2960\kappa^{2}+3600\right)}{40960}J^{6}
 \\&&
 -\frac{65709\kappa^{6}+446040\kappa^{4}+801360\kappa^{2}+241664}{688128}J^{7}+\mathcal{O}\left(J^8\right).
\end{eqnarray*}
\end{theorem}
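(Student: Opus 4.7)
The plan is to unfold the definition \eqref{eq:IbIa} by substituting the Frobenius expansions and separating the logarithmic singularity from the regular power series part. The key observation is the identification $I_\alpha = 2\pi I_r$ from the previous subsection, so $h = I_\alpha^{-1}(J)$ is exactly the inverse of the Birkhoff normal form \eqref{eq:BNF}, and $2\pi I_r(h(J)) = J$ by construction. Inserting \eqref{eq:Ibeta} into the left-hand side of \eqref{eq:IbIa} and using $\mathcal A_\pm = 2\tan^{-1}(\rho^{\mp1})$ reduces the problem to analysing $\pm 2\pi I_s(h(J))$, since the constant terms $\mathcal A_\pm$ already match.

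For this I would use the expansion $2\pi I_s(h) = J\log h + R_s(h)$ from \eqref{eq:IrIs}, with
$$
R_s(h) := \sum_{n=0}^{\infty}\frac{1}{n+1}\!\left\{b_n-\frac{a_n}{n+1}\right\}h^{n+1},
$$
and split $\log h(J) = \log J + \log(h(J)/J)$. Since $h(J)/J$ is a regular power series in $J$ with constant term $1$, both $J\log(h(J)/J)$ and $R_s(h(J))$ are regular in $J$, and the singular contribution $\pm J\log(\pm J)$ on each side of \eqref{eq:IbIa} matches automatically. Equating regular parts then gives
$$
\mp\sigma(J) \mp J = \mp \tfrac12\log\!\left(\tfrac{64}{\kappa^2+4}\right) J \pm \bigl[J\log(h(J)/J) + R_s(h(J))\bigr],
$$
which solves for $\sigma(J)$. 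As a sanity check, $a_0 = 1$ and $b_0 = 0$ give $R_s(h) = -h + \mathcal O(h^2)$, so $\pm R_s(h(J)) = \mp J + \mathcal O(J^2)$ cancels the $\mp J$ on the left and recovers the predicted linear coefficient $\tfrac12\log(64/(\kappa^2+4))$.

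To produce the stated expansion I would then plug in the closed-form formulas for $a_n$ and $b_n$ derived in the previous section, perform the series inversion $J = 2\pi I_r(h)$ to order $h^7$, and compose with $R_s$ and $\log(h(J)/J)$. The main obstacle is purely computational bookkeeping: tracking all cross-terms up to order $J^7$ and keeping the sign conventions between the $+$ and $-$ branches consistent (in particular, the meaning of $\log h$ on each branch of the separatrix). This is best handled in a computer algebra system, with the requirement that both branches yield the same series $\sigma(J)$ serving as a useful internal consistency check.
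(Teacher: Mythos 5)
Your proposal is correct and follows essentially the same route as the paper: substitute the Frobenius expansions \eqref{eq:Ibeta}, \eqref{eq:IrIs} for $I_{\beta_\pm}$, compose with $I_\alpha^{-1}$ (the Birkhoff normal form), and read off $\sigma(J)$ from the definition \eqref{eq:IbIa}, with the actual coefficients produced by series inversion and composition in a computer algebra system. The explicit splitting $\log h(J)=\log(\pm J)+\log\bigl(h(J)/(\pm J)\bigr)$ and the matching of constant and singular terms that you spell out are exactly the bookkeeping implicit in the paper's terser argument, and your leading-order sanity check is consistent with it.
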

\begin{proof}
The action $I_\beta(h)$ is given as the series expansions  \eqref{eq:Ibeta}, \eqref{eq:IrIs} whose coefficients where obtained
from the Frobenius expansion of the Picard-Fuchs equation. The action $ I_\alpha(h)$ is similarly given by \eqref{eq:Ialpha},
and the inverse of this series is the Birkhoff normal form $H^*(J)$. Composing $I_\beta(h)$ with $H^*(J)$ gives 
the series of the action $I_\beta$ in terms of the normal form action $J$, from which 
the symplectic invariant $\sigma(J)$ can be read off using the definition in \eqref{eq:IbIa}.
\end{proof}

Notice that for $\kappa = 0$ (i.e.{} $\rho = 1$, i.e.{} $\Theta_1^{-1} - \Theta_2^{-1} = \Theta_2^{-1} - \Theta_3^{-1}$)
the invariant is an even function of $h$, so that both sides of the separatrix, for positive 
and negative $h$ or $J$ are the same. 
The linear term has maximal value of
$\log 4$ at $\kappa = 0$. For positive $\kappa$ all higher order terms are negative.
The series expansion of $\sigma$ has been numerically verified and agrees well with 
the values obtained from a direct numerical computation of $I_{\beta_\pm} \circ  I_\alpha^{-1}$. 

\section{Convergence of the symplectic invariant}

Convergence of the Birkhoff normal form for analytic integrable systems has recently been proven by Zung \cite{Zung2005},
also the references therein for earlier less general results. Convergence of the series expansions of the actions themselves 
are classical, and can also be obtained from the Picard-Fuchs equation. However, we are not aware of results 
about the convergence of the symplectic invariant. In general it is considered to be a formal series only. However,
in the analytic case one may expect that the symplectic invariant has non-zero radius of convergence.
Here we briefly report that numerically we find that the symplectic invariant has the same non-zero radius of convergence 
as the Birkhoff normal form; the reason for this remarkable observation is unclear.

To analyse the asymptotics of $a_n$ define the ratio
$r_n = \frac{(2n+1) a_n }{ (2n-1) a_{n-1}}$ and the recursion becomes 
$r_n = \frac{4 - n^{-2}}{\tfrac{\kappa}2 + r_{n-1}^{-1}}$.
The leading order iteration $r_n = 2 \kappa + \tfrac4{r_{n-1}}$ has two fixed points
at $r = -2\rho^{-1}$ and $r = 2 \rho$. The positive fixed point $r = 2 \rho$ is stable 
for positive $\kappa$ (or $\rho > 1$), while the negative fixed point $r = -2\rho^{-1}$ 
is stable for negative $\kappa$ (or $\rho < 1$).
The radius of convergence in $h$ is given by $|r_\infty|^{-1}$ which gives $\tfrac12 \min( \rho, \rho^{-1})$,
and is thus controlled by the roots of $w^2$ closest to zero.
For $b_n$ a similar argument \label{argument for convergence of b_n} works after explicitly controlling the size of $\frac{a_{n-1}}{b_{n-1}}$ and  $\frac{a_{n-2}}{b_{n-1}}$.
Thus both series with coefficients $a_n$ and $b_n$ have the same radius of convergence $\tfrac12 \min( \rho, \rho^{-1})$.

The Birkhoff normal form is given by the inverse of the series with coefficients $\frac{a_n}{n+1}$ 
which converges, and thus this series converges as well, as expected from the general theory.
However, since inverting a series is a highly non-linear process explicit formulas
cannot easily be obtained. Similarly, the symplectic invariant is based on the convergent
series with coefficients $\frac{b_n}{n+1} - \frac{a_n}{{(n+1)}^2}$ composed with the Birkhoff normal 
form, so again we expect convergence.
Numerically computing the ratios of successive coefficients 
indicates that the radii of 
convergence of both the Birkhoff normal form and the symplectic invariant are equal. This is a surprising observation. Furthermore, observing the morphology and rates of decay of ratios of coefficients for varying $\kappa$ and increasing $N$ leads us to conclude that the radius of convergence of the Birkhoff normal form and the symplectic invariant is at least 25\% larger than the radius of convergence of $I_\alpha$ and the regular part of $I_\beta$.
For more details on these results see \cite{Papadopoulos2013}.

\section{Non-equivalence with the pendulum}

To compare the Euler top and the pendulum the
topologies of their separatrices
need to be made the same by discrete symmetry reduction, 
as is required by Theorem \ref{thm:equivalence}. 
Initially they are not the same, two joined circles intersecting twice versus a ``figure-eight''.
The discrete reduction for the top was described in section 2 and illustrated in 
Figure~\ref{fig:ActionAreas}. For the pendulum a discrete symmetry reduction 
reduces it to a similar phase portrait.
The semi-global
symplectic invariant near the unstable hyperbolic equilibrium of the
pendulum is given in \cite{Dullin2013} to leading order as
\[
\sigma_{\mathrm P}(J)=\ln32\, J+\mathcal{O}\left(J^{2}\right).
\]
For the pendulum to be equivalent to a particular Euler top the 
leading term in
\[
\sigma_{\mathrm E}(J)=\frac{1}{2}\ln\left(\frac{64}{\kappa^{2}+4}\right)J+\mathcal{O}\left(J^{2}\right)
\]
would need to coincide. However, the maximum value  of 
the leading order term which is attained for $\kappa = 0$ is $\ln 4$, 
so that the leading order term for the pendulum is always bigger
than that of any Euler top. Hence there is no Euler top that is semi-globally 
equivalent to the pendulum.

This may seem to contradict a theorem in \cite{Holm1991,Marsden1999},
where it is shown that rigid-body motion reduces 
to pendulum motion when using a different Poisson structure for the rigid body.
However, only the Poisson structure $\PoissonMat$ comes from the original physical system,
and in our notion of equivalence we are not allowed to change this Poisson structure.


\bibliography{bib}
\bibliographystyle{plain}

\end{document}